\newcommand{\ttk}{\tilde{\tau}_k}
\newcommand{\T}{\mathcal{T}}
\newlength{\textlarg}
\newtheorem{theorem}{Theorem}
\newtheorem{lemma}[theorem]{Lemma}
\newtheorem{corollary}[theorem]{Corollary}
\newtheorem{proposition}[theorem]{Proposition}
\definecolor{gris}{gray}{0.75}
\newcommand{\greybox}[1]{#1}
\title{Some families of trees arising in permutation analysis\footnote{A preliminary version of this work appeared in the extended abstract~\cite{ShortVersion}.}}
\author{Mathilde Bouvel\footnote{LaBRI/CNRS, Universit\'e Bordeaux, and Institut f\"ur Mathematik, Universit\"at Z\"urich.}, 
Marni Mishna\footnote{Dept. Mathematics, Simon Fraser University, Burnaby, Canada.}, 
Cyril Nicaud\footnote{Laboratoire d'Informatique Gaspard Monge (LIGM), Universit\'e Paris-Est, Marne-la-Vall\'ee.}}
\date{}
\begin{document}
\maketitle
\begin{abstract}
We extend classical results on simple varieties of trees 
(asymptotic enumeration, average behavior of tree parameters)
to trees counted by their number of leaves. 
Motivated by genome comparison of related species, 
we then apply these results to strong interval trees with a restriction on the arity of prime nodes. 
Doing so, we describe a filtration of the set of permutations based on their strong interval trees. 
This filtration is also studied from a purely analytical point of view, 
thus illustrating the convergence of analytic series towards a non-analytic limit 
at the level of the asymptotic behavior of their coefficients. 
\end{abstract}

\paragraph*{Keywords:} permutations, simple varieties of trees, random generation, tree parameters, asymptotic formulas.

\bigskip
\section{Introduction}
\label{sec:Introduction}
The idea of viewing permutations as enriched trees has been around for
several decades in different research communities. For example, 
the recent enumerative study~\cite{AlAt05} of pattern avoiding 
permutations, in which \emph{(substitution) decomposition trees} 
play a crucial role. Also, the analysis of some sorting algorithms is 
very linked to tree representations of permutations: \emph{PQ 
trees}~\cite{BoLu76} appear in the context of graph algorithms and
\emph{strong interval trees} arise in comparative genomics~\cite[and
references therein, for instance]{BoChMiRo11}.

\medskip

The focus of this article is the study of strong interval trees (a.k.a. decomposition trees). 
Their typical shape (under the uniform distribution) has been described in~\cite{BoChMiRo11}, 
showing that they have a very flat, somehow degenerate, shape. 
Strong interval trees are an essential tool to model and study genome rearrangements. 
But contrary to what this average shape shows, 
the trees associated to permutations that arise in the comparison of mammalian genomes seem to have a richer, deeper structure. 
This suggest that trees coming from permutations under the uniform distribution do not adequately represent the trees that arise in genomic comparisons. 

In~\cite{BoChMiRo11}, Bouvel~\emph{et al.} considered a subclass of
strong interval trees --selected because they represent what is known
as \emph{commuting scenarios}~\cite{BeBeChPa07}-- that correspond to the
class of \emph{separable permutations}. This is a first step towards a
more relevant model of permutations which arise in genome
comparison. By studying asymptotic enumeration and parameter formulas
for separable permutations, they proved that the complexity of 
the algorithm of~\cite{BeBeChPa07} solving the \emph{perfect sorting by reversals} problem
is polynomial time on separable permutations, whereas this problem is
NP-complete in general. Furthermore they were also able to
describe some average-case properties of the perfect sorting scenarios
for separable permutations.

Ultimately, a clear understanding of the properties possessed by
  the strong interval trees that represent the comparison of actual
  genomes might tell us something about the evolutionary process.
Bouvel~\emph{et al.}~\cite{BoChMiRo11} conclude their study on
separable permutations with a suggestion for the next step: strong
interval trees with degree restrictions on certain internal
nodes. It is a very controlled way to introduce bias in the
distribution of strong interval trees.  This is precisely what we do in
this work; namely, we study strong interval trees where the prime
nodes have a bounded number of children. 
In this work, we focus on the combinatorial analysis
of these restricted sets of trees.
They can be completely understood combinatorially and analytically, 
and so we have access to enumeration and analysis of some tree parameters 
that are ultimately related to the complexity of computing perfect sorting scenarios, or to properties of these scenarios. 

\medskip

Although our initial motivation is the application of combinatorial analysis to 
a better understanding of models for genome rearrangements, 
we believe our study has ramifications of independent interest in analytic combinatorics. 
Indeed, our work reveals a very lush substructure of permutations 
whose study from an analytical point of view allows us to formulate 
new questions on the convergence of sequences of combinatorial series. 

Specifically, we define nested families of trees (which are almost simple varieties of trees) whose limit is the set of all strong interval trees. 
The components are then families of trees to which we are able to apply a very complete set of tools: 
asymptotic analysis, parameter analysis, random generation. 
But the complete set of strong interval trees is not even close to be a simple variety of trees, 
so that these tools are inaccessible to the full class 
without working through the size preserving bijection between strong interval trees and permutations.  
The question that we ask is then to understand at the analytical level the convergence of 
algebraic subclasses of a non-analytic class towards the full class. 
As we explain in details in our work, this question is naturally asked for strong interval trees, 
but it could also be considered for other classes such as $k$-regular graphs~\cite{Gess90} or $\lambda$-terms of bounded unary height~\cite{BGG11}. 
 
\medskip

The organization of this article is as follows. 
First, in Section~\ref{sec:General} we present some very general theorems 
for asymptotic enumeration and parameter analysis in families of trees counted by leaves, 
that are widely applicable.
Then in Section~\ref{sec:CIT} we describe strong interval
trees, a decomposable combinatorial class of trees counted by leaves, in bijection with permutations. 
Next, in Section~\ref{sec:restrictedCIT} we introduce a filtration of strong interval trees, 
where we bound the arity of so-called \emph{prime} nodes. 
As discussed in Section~\ref{sec:restrictedCIT}, this filtration has applications in bio-informatics 
for the study of genome rearrangements. 
But it also witnesses an intriguing analytic phenomenon: 
the convergence of a sequence of (well-behaved and algebraic) families of trees 
towards the (transcendental and non-analytic) class of permutations. 
Section~\ref{sec:filtration} establishes some first results in the exploration of this phenomenon. 

\section{When the size of a tree is the number of leaves}
\label{sec:General}
There are many works which study the average case behavior of 
tree parameters, where the size of a tree is the number 
of internal nodes or of both internal nodes and leaves. 
The generating functions of these trees satisfy a functional
equation of the form $T(z)=z\cdot\Phi(T(z))$, and when $\Phi$ satisfies
certain conditions, such as analyticity, then there are formulas for
inversion, resulting in explicit enumerative results. A class of trees
amenable to this treatment is said to be a \emph{simple variety of trees}. 
The subject is exhaustively treated in~Section VII.3 of~\cite{FlSe09}. 

If, instead, we define the size of a tree as the number of leaves,
the generating function satisfies a relation of the form
$T(z)=z+\Lambda(T(z))$. The same general theorems on inversion still
work, and it suffices to apply them and unravel the results. 
Even though they are less frequent, these
have also been 
studied in the literature, and the applicability
of the inversion lemmas is noted in~Example VII.13 of
\cite{FlSe09}. In this section we do this explicitly. 
In this work, when refering to simple varieties of trees, 
we mean a family of trees where the size is defined as the number of leaves, 
and whose study falls into the scope of the results of the present section. 

Table~\ref{tab:summary} summarizes the results of this section. We
determine asymptotic formulas for number of trees, and several key
parameters. The shape of the formulas are, unsurprisingly, not unlike
those that arise in the study of trees counted by internal nodes.
\begin{table}[t]
\greybox{
\center
\begin{tabular}{lll}
  Asymptotic number of trees with $n$ leaves & $\sqrt{\frac{\rho}{2\pi\Lambda''(\tau)}}\cdot \frac{\rho^{-n}}{n^{3/2}}$\\[4mm]
  The average number of nodes of arity $\kappa$ in trees with $n$ leaves &
  $\frac{\lambda_\kappa\tau^\kappa}{\rho} \cdot n$\\[4mm]
  The average number of internal nodes in trees with $n$ leaves &
  $\frac{\Lambda(\tau)}{\rho}\cdot n=\frac{\tau-\rho}{\rho}\cdot
  n$\\[4mm]
  The average subtree size sum in trees with $n$ leaves &
  $\sqrt{\frac{\pi}{2\rho\Lambda''(\tau)}}\cdot n^{3/2}$\\[2mm]

\end{tabular}}
\caption{A summary of parameters of trees given by $T = z +
  \Lambda(T)$. The value $\tau$ is the unique solution to
  $\Lambda'(\tau)=1$ between $0$ and $R_{\Lambda}<1$, and $\rho=\tau-\Lambda(\tau)$. 
} 
\label{tab:summary}
\end{table}

\subsection{Asymptotic number of trees}
Our entire analysis is roughly a consequence of the Analytic Inversion
Lemma and Transfer Theorems.  The version to which we appeal is given
and proved in~\cite{FlSe09}. Citations to original sources may be
found therein.  The following theorem is a slight adaptation of
Proposition~IV.5 and Theorem~VI.6 of~\cite{FlSe09} to combinatorial equations of the
form $\T=\mathcal{Z}+\Lambda(\T)$ instead of
$\T=\mathcal{Z}\cdot\Phi(\T)$.

\begin{theorem}\label{thm:main}
  Let~$\Lambda$ be a function analytic at~$0$, with non-negative
  Taylor coefficients, and such that, near~$0$,
\[
\Lambda(z) = \sum_{n\geq 2} \lambda_n z^n.
\]
Let~$R_\Lambda$ be the radius of convergence of this series. Under the
condition \mbox{$\lim_{x\rightarrow R_\Lambda^{-}} \Lambda'(x) > 1$},
there exists a unique solution $\tau \in (0,R_\Lambda)$ of the
equation~$\Lambda'(\tau) = 1$.

Then, the formal solution $T(z)$ of the equation 
\begin{equation}
T(z) = z+\Lambda(T(z)) \label{eq:main}
\end{equation}
is analytic at $0$, its unique dominant singularity
is at $\rho = \tau - \Lambda(\tau)$ and its expansion near $\rho$ is
\begin{equation*}
T(z) = \tau - \sqrt{\frac{2\rho}{\Lambda''(\tau)}} \sqrt{1-\frac{z}{\rho}} + \mathcal{O}\left(1-\frac{z}{\rho}\right).
\end{equation*}
Moreover, if~$T$ is aperiodic, then one has
\begin{equation*}
[z^n]T(z) \sim \sqrt{\frac{\rho}{2\pi\Lambda''(\tau)}}\cdot \frac{\rho^{-n}}{n^{3/2}}.
\end{equation*}
\end{theorem}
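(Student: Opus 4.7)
The plan is to reduce the statement to the standard analytic inversion / smooth implicit function framework of Flajolet--Sedgewick, exactly as indicated in the theorem's statement. The key observation is that \eqref{eq:main} can be rewritten as $\psi(T(z)) = z$ where $\psi(u) := u - \Lambda(u)$. So $T$ is the compositional inverse of $\psi$ at the origin, and the whole analysis amounts to locating and analyzing the singularity of this inverse.

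First I would establish existence and uniqueness of $\tau$. Since $\Lambda$ has non-negative coefficients and starts in degree $2$, the derivative $\Lambda'$ is continuous, non-decreasing on $[0,R_\Lambda)$, with $\Lambda'(0)=0$ (in fact $\Lambda'$ is strictly increasing as soon as $\Lambda$ is not a pure quadratic, which in particular is needed for a meaningful generating function of trees). By the intermediate value theorem and the hypothesis $\lim_{x\to R_\Lambda^{-}}\Lambda'(x)>1$, there is a unique $\tau\in(0,R_\Lambda)$ with $\Lambda'(\tau)=1$, i.e.\ with $\psi'(\tau)=0$. Note that $\psi$ is analytic on $(-R_\Lambda,R_\Lambda)$, $\psi(0)=0$, $\psi'(0)=1>0$, so by the analytic inverse function theorem $T=\psi^{-1}$ is analytic at $0$ with non-negative Taylor coefficients; this justifies that the formal solution of \eqref{eq:main} is in fact analytic at~$0$.

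Next I would locate the dominant singularity and obtain the singular expansion. On $[0,\tau)$, $\psi'>0$, so $\psi$ is a strictly increasing analytic bijection from $[0,\tau)$ onto $[0,\rho)$ with $\rho=\psi(\tau)=\tau-\Lambda(\tau)$; in particular $T$ is analytic on $[0,\rho)$ and $T(\rho^-) = \tau$. A Taylor expansion of $\psi$ at $\tau$ gives
\begin{equation*}
\psi(u) = \rho - \tfrac{1}{2}\Lambda''(\tau)\,(u-\tau)^2 + \mathcal{O}\bigl((u-\tau)^3\bigr),
\end{equation*}
with $\Lambda''(\tau)>0$ (again because $\Lambda$ has non-negative coefficients and is not affine). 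Substituting $u=T(z)$ and $\psi(u)=z$ and solving for $u-\tau$ by the local Weierstrass / analytic inversion lemma (Lemma~VI.6 / the ``smooth implicit function schema'' of~\cite{FlSe09}, adapted to the additive form $z=\psi(u)$) yields the square-root singular expansion
\begin{equation*}
T(z) = \tau - \sqrt{\frac{2\rho}{\Lambda''(\tau)}}\,\sqrt{1-\frac{z}{\rho}} + \mathcal{O}\!\left(1-\frac{z}{\rho}\right),
\end{equation*}
valid in a slit neighbourhood of $\rho$. This $\Delta$-analytic continuation is precisely what the inversion lemma of~\cite{FlSe09} delivers.

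Finally, for the coefficient asymptotics, I would invoke the aperiodicity assumption to guarantee that $\rho$ is the \emph{unique} singularity on the circle $|z|=\rho$ (standard Daffodil-lemma argument), so that the singular expansion at $\rho$ fully determines the asymptotics. Applying the standard transfer theorem $[z^n]\sqrt{1-z/\rho}\sim -\frac{1}{2\sqrt{\pi}}\,\rho^{-n}n^{-3/2}$ (Theorem~VI.4 of~\cite{FlSe09}) gives
\begin{equation*}
[z^n]T(z) \sim \sqrt{\frac{2\rho}{\Lambda''(\tau)}}\cdot\frac{1}{2\sqrt{\pi}}\cdot\rho^{-n}n^{-3/2} = \sqrt{\frac{\rho}{2\pi\Lambda''(\tau)}}\cdot\frac{\rho^{-n}}{n^{3/2}},
\end{equation*}
as claimed. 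The main technical point is not any single calculation but rather verifying that the hypotheses of the analytic inversion lemma of~\cite{FlSe09}, which is stated for $T=z\Phi(T)$, apply verbatim to the additive form $T=z+\Lambda(T)$; once this translation is made (by working with $\psi(u)=u-\Lambda(u)$ rather than $u/\Phi(u)$), every step above is routine.
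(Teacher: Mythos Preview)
Your proposal is correct and follows essentially the same route as the paper: both rewrite \eqref{eq:main} as $\psi(T(z))=z$ with $\psi(u)=u-\Lambda(u)$, establish $\tau$ via monotonicity of $\Lambda'$, invoke analytic inversion to get analyticity at $0$ and the square-root expansion at $\rho=\psi(\tau)$, and then apply the Daffodil lemma plus transfer under aperiodicity. The paper is slightly more explicit about Pringsheim's theorem and the argument that $T(r)=\tau$ at the radius of convergence (following Proposition~IV.5 of~\cite{FlSe09}), whereas you obtain this directly from the bijectivity of $\psi$ on $[0,\tau)$; these are equivalent.
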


\begin{proof}
The conditions on $\Lambda$ imply that both $\Lambda(x)$ and $\Lambda'(x)$ are increasing functions for $x$ in the real interval $(0,R_\Lambda)$. 
Since $\Lambda'(0) = 0$ and since \mbox{$\lim_{x\rightarrow R_\Lambda^{-}} \Lambda'(x) > 1$}, there exists $R'\in (0,R_\Lambda)$ such that
$\Lambda'(R')>1$. Hence there exists a unique $\tau\in(0,R')$, and thus on $(0,R_{\Lambda})$, such that  $\Lambda'(\tau)=1$.

Now observe that Equation~\ref{eq:main} admits a unique formal power series solution $T(z)$, which has non-negative coefficients,
by bootstrapping the coefficients. By Analytic Inversion~\cite[Lemma IV.2]{FlSe09}, this solution is analytic at $z=0$ and with $T(0)=0$:
Equation~\ref{eq:main} can be rewritten $\Psi(T(z)) = z$, with $\Psi(x) = x-\Lambda(x)$, and $\Psi'(0) \neq 0$.

By Pringsheim's Theorem, a dominant singularity of $T(z)$, if any, lies on the positive real axis. Let $r\in(0,+\infty]$ be the
radius of convergence of $T(z)$ at $0$, and set $T(r)\in (0,+\infty]$ be defined by $T(r) = \lim_{x\rightarrow r^-}T(x)$. Following almost exactly
the proof of Proposition~IV.5 in~\cite[p. 278]{FlSe09}, we get that $T(r)=\tau$. Since  $T$ and $\Psi$ are inverse functions,  we get
by continuity that $T(z)$ has a unique dominant singularity at $\rho = \tau-\Lambda(\tau)$.

The remainder of the proof follows almost readily the one of Theorem~VI.6 in~\cite[p. 405]{FlSe09}, using our specific equations.
\end{proof}

\subsection{Parameter Analysis}

In the case of trees counted by internal nodes, the study of
recursively defined parameters is very straightforward, starting from
generating function equations. We can describe analogous versions for
trees counted by leaves. In particular, we consider additive
parameters, and describe a Modified Iteration Lemma, adapted to our
notion of size. We illustrate the lemma on number of internal nodes,
subtree size sum and number of nodes of a given arity.

\subsubsection{General additive parameters}

Our focus is on tree parameters that can be computed additively by
parameters of subtrees. More precisely, we consider a parameter $\xi(t)$ for
trees $t\in\T$ which satisfy the relation
\[
\xi(t) = \eta(t) + \sum_{j=1}^{\operatorname{deg}(t)}\sigma(t_j),
\]
where $\operatorname{deg}(t)$ is the arity of the root, $t_j$ are its children, 
$\eta$ is a simpler tree parameter, 
and $\sigma$ is either $\xi$ or a simpler tree parameter. 
Let $\Xi(z)$, $H(z)$ and 
$\Sigma(z)$ be the associated cumulative functions of $\xi$, $\eta$
and $\sigma$. That is, 
\[
\Xi(z)=\sum\limits_{t\in\T}\xi(t) z^{|t|}, \quad 
H(z)=\sum\limits_{t\in\T}\eta(t) z^{|t|} \textrm{\quad  and \quad  }
\Sigma(z)=\sum\limits_{t\in\T}\sigma(t) z^{|t|}.
\]

Lemma VII.1 in~\cite{FlSe09} has an analogue for trees counted by
their leaves, and it is proved in a very similar way.
\begin{lemma}[Iteration Lemma for trees counted by their leaves]
\label{lm:inv}
Let~$\T$  be a class of trees satisfying $\T=\mathcal{Z}+\Lambda(\T)$.
The cumulative generating functions are related by
\[
\Xi(z) = H(z) + \Lambda'(T(z))\,\Sigma(z).
\]
In particular, if $\sigma\equiv\xi$, one has \ $
\Xi(z) = \frac{H(z)}{1-\Lambda'(T(z))} = H(z)\cdot T'(z).
$
\end{lemma}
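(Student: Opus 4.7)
The plan is to mimic the proof of Lemma~VII.1 in~\cite{FlSe09}, just adapted to the fact that a tree here is either a single leaf (of size~$1$) or an internal node whose subtrees have sizes summing to~$|t|$ (since only leaves count).

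First I would write $\Xi(z)=\sum_{t\in\T}\xi(t)z^{|t|}$ and split it using $\xi(t)=\eta(t)+\sum_{j=1}^{\operatorname{deg}(t)}\sigma(t_j)$, so that
\[
\Xi(z) \;=\; H(z) \;+\; \sum_{t\in\T}\Bigl(\sum_{j=1}^{\operatorname{deg}(t)}\sigma(t_j)\Bigr) z^{|t|}.
\]
The leaves contribute nothing to the second sum (their degree is $0$), so one may restrict it to internal trees, which by $\T=\mathcal{Z}+\Lambda(\T)$ are decomposed according to the arity $\kappa\geq 2$ of the root and the ordered sequence $(t_1,\dots,t_\kappa)$ of its children, with weight $\lambda_\kappa$. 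Using $|t|=|t_1|+\cdots+|t_\kappa|$, this sum rewrites as
\[
\sum_{\kappa\geq 2}\lambda_\kappa \sum_{t_1,\dots,t_\kappa\in\T}\sum_{j=1}^{\kappa}\sigma(t_j)\, z^{|t_1|+\cdots+|t_\kappa|}.
\]
For each fixed $j$ the inner contribution is $\Sigma(z)\,T(z)^{\kappa-1}$, and summing over $j=1,\dots,\kappa$ produces an extra factor $\kappa$. Recognising $\sum_{\kappa\geq 2}\kappa\lambda_\kappa T(z)^{\kappa-1}=\Lambda'(T(z))$ then yields $\Xi(z)=H(z)+\Lambda'(T(z))\,\Sigma(z)$.

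For the specialisation $\sigma\equiv\xi$ one has $\Sigma=\Xi$; solving the resulting linear relation for $\Xi$ gives $\Xi(z)=H(z)/(1-\Lambda'(T(z)))$. To recognise this as $H(z)\cdot T'(z)$, I would simply differentiate the defining equation $T(z)=z+\Lambda(T(z))$ implicitly, obtaining $T'(z)=1+\Lambda'(T(z))T'(z)$, and hence $T'(z)=1/(1-\Lambda'(T(z)))$.

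There is no real obstacle: the only subtlety is bookkeeping in the decomposition over root arity, being careful that (i) the leaf case contributes only to $H$ since its root has no children, and (ii) the ordered sequence of children naturally produces the factor $T(z)^{\kappa-1}$ for each fixed child position, which after summing on $j$ reconstructs precisely $\Lambda'(T(z))$. Everything else is a formal manipulation of generating series.
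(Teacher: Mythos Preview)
Your proof is correct and follows essentially the same approach as the paper's own proof: both split $\Xi$ into $H$ plus a remainder, decompose the remainder according to the root arity, recognise the resulting sum as $\Lambda'(T(z))\,\Sigma(z)$, and then differentiate $T(z)=z+\Lambda(T(z))$ to identify $1/(1-\Lambda'(T(z)))$ with $T'(z)$. Your handling of the leaf case and the indexing $\kappa\geq 2$ is in fact slightly more explicit than the paper's.
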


\begin{proof}
Unraveling the definition of $ \xi(t)$, we have 
\[
\Xi(z) = H(z) + \widetilde{\Xi}(z) \textrm{ \ with \ } \widetilde{\Xi}(z) = \sum_{t \in \T} z^{|t|} \sum_{j=1}^{\operatorname{deg}(t)} \sigma(t_j).
\]
Splitting the sum defining $\widetilde{\Xi}(z)$ according to the value $r$ of the degree of the root of $t$, we get:  
\begin{align*}
\widetilde{\Xi}(z) &= \sum_{r \geq 1} \lambda_r z^{|t_1| + \ldots + |t_r|}(\sigma(t_1) + \ldots + \sigma(t_r)) \\ &= 
\sum_{r \geq 1} \lambda_r \left( \sigma(t_1)z^{|t_1|}z^{|t_2| + \ldots + |t_r|} + \ldots + \sigma(t_r) z^{|t_r|}z^{|t_1| + \ldots + |t_{r-1}|} \right)\\ &= 
\sum_{r \geq 1} \lambda_r \times r \times \Sigma(z) T(z)^{r-1}=\Lambda'(T(z)) \,  \Sigma(z).
\end{align*}

In the case  $\sigma\equiv\xi$, $\Xi(z) = \frac{H(z)}{1-\Lambda'(T(z))}$ is derived immediately. 
The last equality is a consequence of $T'(z)(1-\Lambda'(T(z))) = 1$,
which is obtained by differentiating $T(z) = z + \Lambda(T(z))$ with
respect to~$z$.
\end{proof}

Note that if $\sigma\equiv\xi$, the parameter is said to be \emph{recursive}. 
Most basic parameters are recursive, and in what
follows we shall use this case only. 

Note also that when analytic treatment applies,~$T(z)$ has a square-root singularity (see Theorem~\ref{thm:main}),
so that $T'(z)$ has an inverse square-root singularity (by analytic
derivation). Therefore, whenever $H(z)$ tends to a positive real when
$z\rightarrow\rho$ (under some analytic conditions), then the Transfer Theorem 
yields an asymptotic equivalent of the mean value of the parameter of
the form $c\cdot n$. This is for instance the case for the number of
nodes of fixed arity and the number of internal nodes, as shown below.

\subsubsection{Three applications}

\paragraph{Number of nodes with exactly $\kappa$ children.}
We ``mark'' nodes of arity $\kappa$ by setting
\[
\eta(t) = \begin{cases}
1 & \text{if the root of $t$ is of arity }\kappa,\\
0 & \text{otherwise}.
\end{cases}
\]
Hence if $\kappa\geq 2$, $
H(z) = \sum\limits_{t\in \T} \eta(t)z^{|t|} =
\sum\limits_{t_1,\ldots,t_\kappa\in\T}\lambda_\kappa
z^{|t_1|+|t_2|+\ldots+|t_\kappa|} $ 
so that $H(z) = \lambda_\kappa\,T(z)^\kappa$. Now,\footnote{This is the
  only other possibility since there can be no unary nodes in a proper
  specification.} if $\kappa=0$, then $H(z) = z$ which is not interesting
since it is simply counting the number of leaves \emph{i.e.} the size of the
tree.

\noindent By Lemma~\ref{lm:inv}, for any $\kappa\geq 2$ one has~$\Xi(z) =
\lambda_\kappa T(z)^{\kappa}\cdot T'(z)$. Since  the singular expansion of $T(z)$ near $\rho$ is
\begin{equation}\label{eq:gamma}
T(z) = \tau - \gamma \sqrt{1-z/\rho} +o\left(\sqrt{1-z/\rho}\right), \text{with }\gamma=\sqrt{\frac{2\rho}{\Lambda''(\tau)}}
\end{equation}
then near $\rho$, one has $T(z)^\kappa = \tau^\kappa + \mathcal{O}\left(\sqrt{1-z/\rho}\right).$
Using the Singular Differentiation Theorem we have
\[
T'(z) = \frac{\gamma}{2\rho \sqrt{1-z/\rho}} +o\left(\frac1{\sqrt{1-z/\rho}}\right),
\text{ so that }
\Xi(z) = \frac{\lambda_\kappa \gamma\tau^\kappa}{2\rho \sqrt{1-z/\rho}} + o\left(\frac1{\sqrt{1-z/\rho}}\right),
\]
from which we get the asymptotics of the cumulative generating function
\[
[z^n]\Xi(z)\sim\frac{\lambda_\kappa \gamma\tau^\kappa\rho^{-n-1}}{2\sqrt{\pi n}}.
\]
The asymptotics of the average value across all trees of size $n$ is then 
\[
\frac{[z^n]\Xi(z)}{[z^n]T(z)} \sim 
\frac{\lambda_\kappa \gamma\tau^\kappa\rho^{-n-1}}{2\sqrt{\pi n}} \cdot \sqrt{\frac{2\pi\Lambda''(\tau)}{\rho}} \frac{n^{3/2}}{\rho^{-n}}
\sim \frac{\lambda_\kappa\tau^\kappa}{\rho} \cdot n ,
\]
as reported in Table~\ref{tab:summary}.

\paragraph{Number of internal nodes.}
For this parameter, just take the following definition for $\eta$:
\[
\eta(t) = \begin{cases}
0 & \text{if  }t\text{ is just one leaf,} \\
1 & \text{otherwise}.
\end{cases}
\]
One has $H(z) = \sum\limits_{t\in\T} \eta(t)z^{|t|} = T(z)-z$, 
and therefore (with the $\gamma$ of Equation~\eqref{eq:gamma})
\[
\Xi(z) = \left(T(z)-z\right)\,T'(z) = \frac{\gamma(\tau-\rho)}{2\rho \sqrt{1-z/\rho}}   +o\left(\frac1{\sqrt{1-z/\rho}}\right).
\]
It follows that 
\[
[z^n]\Xi(z)\sim\frac{\gamma(\tau-\rho)\rho^{-n-1}}{2\sqrt{\pi n}} \textrm{\quad and \quad }
\frac{[z^n]\Xi(z)}{[z^n]T(z)} \sim \frac{\tau-\rho}{\rho} \cdot n.
\]

\paragraph{Subtree size sum.}
We are interested in the subtree size sum parameter, defined by
$\eta(t) = |t|$. This implies that $H(z)=zT'(z)$, so that
\[
\Xi(z) = z T'(z)^2 = \frac{\gamma^2}{4\rho(1-z/\rho)} +o\left(\frac{1}{1-z/\rho}\right)
\text{\quad and \quad}
[z^n]\Xi(z)\sim\frac{\gamma^2}{4\rho} \cdot \rho^{-n}.
\]
Unlike the two previous examples, this is not an inverse of square-root singularity.
In this case, for the average value of the subtree size sum, we find  
\[
\frac{[z^n]\Xi(z)}{[z^n]T(z)} \sim \frac{\gamma^2}{4\rho} \rho^{-n} \cdot \sqrt{\frac{2\pi\Lambda''(\tau)}{\rho}} \frac{n^{3/2}}{\rho^{-n}}
 \sim \sqrt{\frac{\pi}{2\rho\Lambda''(\tau)}}\cdot n^{3/2},
\]
that is, an asymptotic equivalent in $n^{\frac32}$. 
This behavior is typical for such path length related parameters.

\medskip

There are many other tree parameters that we could consider in a
similar fashion.
\section{Strong Interval Trees}
\label{sec:CIT}
Our interest in trees counted by leaves is spawned by \emph{strong
 interval trees}. Strong interval trees are in a size preserving
bijection with permutations.  They have been introduced in the early
2000's in a bio-informatics context~\cite{HeSt01,BeChdeMRa05}: they
are a very effective data structure for algorithms in reconstruction
of genome evolution scenarios, as we briefly mentioned in
Section~\ref{sec:Introduction}.  Under a different name, and roughly
at the same time, these objects also made their appearance in
combinatorics, in the study of permutation patterns: strong interval
trees (rather called (substitution) decomposition trees) are a tree
representation of the block decomposition of permutations described by
Albert and Atkinson~\cite{AlAt05}.  Although the proper definition of
strong interval trees is relatively recent, it can be traced to older
notions of decomposition (of graphs in particular): it is a close
relative of the modular decomposition trees of permutation
graphs~\cite{BeChdeMRa05} and even has origins in the PQ-trees of
Booth and Lueker~\cite{BoLu76}.

In this section, we review the definition of strong interval trees and the bijection with permutations. 
Then, we turn to a presentation of these objects as a constructible combinatorial class, 
in the flavor of what is done in Section~\ref{sec:General}. 

\subsection{Definition and bijection with permutations}

Strong interval trees are most often defined via the bijection that relates them to permutations. 
Different presentations of this bijection can be found for instance in~\cite{AlAt05,BeChdeMRa05,BoChMiRo11}. 
For the reader who is not familiar with these objects, we review the definition of strong interval trees, 
and the correspondence with permutations below. 

In the context of our work, a permutation of size $n$ is a word containing exactly once each symbol in $\{1,2,\ldots, n\}$. 

An \emph{interval} of a permutation $\sigma$ is a factor of $\sigma$, such that the underlying set of symbols is an interval of integers. 
For instance, $7\,\,9\,\,10\,\,11\,\,13\,\,8\,\,12$ and $3\,\,1\,\,5\,\,4\,\,2$ are intervals of the permutation 
$6\,\,7\,\,9\,\,10\,\,11\,\,13\,\,8\,\,12\,\,3\,\,1\,\,5\,\,4\,\,2$, but $10\,\,11\,\,13$ is not ($12$ is missing). 
For every permutation $\sigma$ of size $n$, the singletons $i$ (for $1\leq i \leq n$) and $\sigma$ itself are intervals of $\sigma$. 
They are called \emph{trivial} intervals of $\sigma$. 

A permutation is said to be~\emph{simple\/} when its only intervals are the trivial ones. 
Note that our convention will be that $1$, $1\,\,2$ and $2\,\,1$ are not simple permutations, although they satisfy the above definition. 
It is immediate to check that there is no simple permutation of size $3$ (each permutation of size $3$ containing an interval of size $2$), 
and that there are $2$ simple permutations of size $4$, namely $2\,\,4\,\,1\,\,3$ and $3\,\,1\,\,4\,\,2$. 
A larger simple permutation is for instance $3\,\,5\,\,7\,\,1\,\,4\,\,2\,\,6$. 
We will go back to the enumeration of simple permutations in the next subsection. 

Two intervals of $\sigma$ \emph{overlap} when their intersection is neither empty nor equal to one of them. 
Returning to our example of $6\,\,7\,\,9\,\,10\,\,11\,\,13\,\,8\,\,12\,\,3\,\,1\,\,5\,\,4\,\,2$, 
the intervals $6\,\,7$ and $7\,\,9\,\,10\,\,11\,\,13\,\,8\,\,12$ overlap (their intersection is $7$), but $10\,\,11$ and $5\,\,4$ don't. 
A \emph{strong} interval of $\sigma$ is an interval that does not overlap any other interval of $\sigma$. 
The trivial intervals are obviously strong. On our running example, the non-trivial strong intervals are 
\[
5\,\,4 \ ; \quad 
3\,\,1\,\,5\,\,4\,\,2 \ ;\quad 
9\,\,10\,\,11 \ ;\quad 
9\,\,10\,\,11\,\,13\,\,8\,\,12 \quad \textrm{ and } \quad 
6\,\,7\,\,9\,\,10\,\,11\,\,13\,\,8\,\,12. 
\]

From their definition, it follows immediately that 
the inclusion order on the set of strong intervals of a permutation $\sigma$ induces a tree structure, 
where the leaves are the singletons, and the root is the $\sigma$ itself. 
This is the \emph{strong interval tree} of $\sigma$. 

From there, and depending on the context, the definition of the strong interval tree may vary. 
For us, these trees are \emph{embedded in the plane}, by imposing the order of the leaves. 
Namely, from left to right, the leaves (corresponding to singletons of $\sigma$) are required to appear in the same order as in $\sigma$. 
The strong interval tree of our running example would then be: 
\begin{center}
\begin{tikzpicture}[level distance=1cm]
\node {$6\,\,7\,\,9\,\,10\,\,11\,\,13\,\,8\,\,12\,\,3\,\,1\,\,5\,\,4\,\,2$}
 child{ node{$6\,\,7\,\,9\,\,10\,\,11\,\,13\,\,8\,\,12$} 
   child { node{$6$}
   }
   child { node{$7$}
   }
   child[missing]
   child { node{$9\,\,10\,\,11\,\,13\,\,8\,\,12$}
     child{ node{$9\,\,10\,\,11$}
       child{ node{$9$}
       }
       child{ node{$10$}
       }
       child{ node{$11$}
       }
     }
     child{ node{$13$}
     }
     child{ node{$8$}
     }
     child{ node{$12$}
     }
   }
 }
 child[missing]
 child[missing]
 child[missing]
 child[missing]
 child{ node{$3\,\,1\,\,5\,\,4\,\,2$}
   child{ node{$3$}
   }
   child{ node{$1$}
   }
   child{ node{$5\,\,4$} 
     child {node{$5$}
     }
     child {node{$4$}
     }
   }
   child{ node{$2$}
   }
 };
\end{tikzpicture}
\end{center}

From this tree, there is a last step that we perform before obtaining what we refer to as the strong interval tree in our work. 
It relies on an important remark, proved for instance in~\cite{BXHaPa05}. 
To state it, we first need to describe how to associate a permutation of size $k$ 
to each node of the strong interval tree with $k$ children.

Note first that given several disjoint strong intervals, the natural order on integers induces an order among them: 
the smaller the elements contained in the interval, the smaller the interval itself. 
On our running example, we have for instance that 
$5\,\,4$ is smaller than $8$ which is itself smaller than $9\,\,10\,\,11$. 
Consider now a non-singleton strong interval, corresponding to an internal node of the strong interval tree. 
Its children (assume there are $k$ of them) are also strong intervals, and they are disjoint. 
So they can be ordered as described above. 
To this node of the tree, we associate a permutation $\tau$ of size $k$ built as follows: 
$\tau_i = j$ if the $i$-th child from the left is the $j$-th smallest one. 
For instance, the permutation $\tau$ associated with the node labeled $9\,\,10\,\,11\,\,13\,\,8\,\,12$ in our running example 
is $2\,\, 4 \,\, 1 \,\, 3$ since $8$ is smaller than $9\,\,10\,\,11$, itself smaller than $12$ and $13$. 

The permutations labeling the node enjoy a remarkable property (see~\cite{BXHaPa05}, or in a somewhat different presentation~\cite{AlAt05}): 
they are either increasing ($1 \, \, 2 \ldots k$) or decreasing ($k \ldots 2 \, \, 1$) or simple. 
In the reminder of this article, when speaking about strong interval trees, 
we mean the plane tree whose structure has been described above, 
but whose internal nodes are only labeled by $\oplus$, $\ominus$ 
(corresponding to increasing or decreasing permutations respectively), or by a simple permutation. 
In particular, the leaves carry no label. 
Nodes labeled by $\oplus$ or $\ominus$ are called \emph{linear}, 
whereas those labeled by simple permutations are called \emph{prime}. 
Figure~\ref{fig:example-CIT}$(a)$ shows the strong interval tree of our running example. 
Figure~\ref{fig:example-CIT}$(b)$ represents this tree for a simple permutation. 
What can be observed on this example is true in general: 
the trees corresponding to simple permutations consist of a single prime node labeled by the permutation itself, with pending leaves.

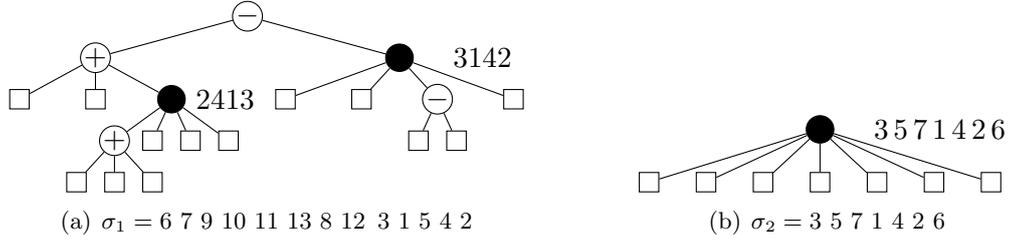
\begin{figure}\center
\mbox{
\subfigure[$\sigma_1=6\,\,7\,\,9\,\,10\,\,11\,\,13\,\,8\,\,12\,\,\,3\,\,1\,\,5\,\,4\,\,2$]{
\begin{tikzpicture}[%
level 1/.style={sibling distance=8cm,level distance=1.1cm},
    level 2/.style={sibling distance=2cm},
    level 3/.style={sibling distance=1cm},
    level 4/.style={sibling distance=1cm},
    scale=0.5
]
\node (root)[circle, draw, inner sep=0pt] {$-$}
 child{node[circle,draw, inner sep=0pt] (L1) {$+$}{
     child{node[draw] (1) {}{}}
     child{node[draw] (2) {}{}}
     child{node[circle, draw, fill, label=right:{$2413$}] (P1) {}{ 
          child{node[circle,draw, inner sep=0pt] (L2) {$+$}{
            child{node[draw] (3) {}{}}
            child{node[draw] (4) {}{}}
            child{node[draw] (5) {}{}}
          }}
          child{node[draw] (6) {}{}}
          child{node[draw] (7) {}{}}
          child{node[draw] (8) {}{}}
          }}
     }}
 child{node[circle, draw, fill, label=right:{\quad $3142$}] (P2) {}{
     child{node[draw] (9) {}{}}
     child{node[draw] (10) {}{}}
     child{node[circle, draw, inner sep=0pt] (L3) {$-$}{
            child{node[draw] (11) {}{}}
            child{node[draw] (12) {}{}}
           }}
     child{node[draw] (13) {}{}}
     }
};
\end{tikzpicture} 
}
\hspace{1cm}
\subfigure[$\sigma_2= 3\,\,5\,\,7\,\,1\,\,4\,\,2\,\,6$]{
\begin{tikzpicture}[%
level 1/.style={sibling distance=1.5cm,level distance=1.4cm},
    level 2/.style={sibling distance=2cm},
    level 3/.style={sibling distance=1cm},
    level 4/.style={sibling distance=1cm},
    scale=0.5
]
\node (root)[circle, draw, fill, label=right:{\quad $3\,5\,7\,1\,4\,2\,6$}] {}
     child{node[draw]  {}{}}
     child{node[draw]  {}{}}
     child{node[draw]  {}{}}
     child{node[draw]  {}{}}
     child{node[draw]  {}{}}
     child{node[draw]  {}{}}
     child{node[draw]  {}{}}
;
\end{tikzpicture} 
}}
\caption{Two permutations and their associated strong interval trees}
\label{fig:example-CIT}
\end{figure}

\medskip

In a strong interval tree, it is impossible for a node labeled by $\oplus$ (resp. $\ominus$) to have a child carrying the same label. 
This property appears (although in disguise) in~\cite{AlAt05}, but is simply proved by contradiction: 
assuming that a parent and a child both carry the label $\oplus$ (resp. $\ominus$) contradicts that the child is a strong interval 
(indeed, it overlaps an interval resulting from the union of one of its own children with one of its siblings). 

With this in mind, strong interval trees are now just plane trees, 
where internal nodes are of arity at least $2$ and carry labels $\oplus$, $\ominus$ or $\tau$ for any simple permutation $\tau$, 
with the additional conditions that a node labeled by $\oplus$ (resp. $\ominus$) does not have a child carrying the same label, 
and that the number of children of a node labeled by a simple permutation $\tau$ is exactly the size of $\tau$. 

It turns out (and the proof follows immediately from~\cite{AlAt05}) that any such tree is the strong interval tree of a permutation. 
Moreover, the above construction provides a bijection between permutations and strong interval trees. 
Note that the bijection is completely constructive, 
and that it can be computed in linear time, although this is quite difficult to achieve, see~\cite{BeChdeMRa05}. 


\subsection{Strong interval trees as a constructible class}

From now on, we denote $\mathcal{P}$ the class of strong interval trees. 
As we have seen above, this class is a set of trees 
where some internal nodes are signed and others are enriched with a simple permutation. 
More precisely, the characterization of strong interval trees given above can be rephrased as show in the following theorem. 

\begin{theorem}[Reformulated from~\cite{AlAt05}]
  The class of permutations is in a size-preserving bijection with
  the combinatorial class~$\mathcal{P}$ of strong interval trees. 
  These are enriched trees defined
  by the following relations, where size is given by the number of
  leaves:
\begin{equation}\label{eq:initialsystem}
\begin{aligned}
\mathcal{P} &= \mathcal{Z}_{\Box} +\, \mathcal{N}_{\oplus}\cdot
\operatorname{Seq}_{\geq 2}{\mathcal{U_{\oplus}}} +\, \mathcal{N}_{\ominus}\cdot
\operatorname{Seq}_{\geq 2}{\mathcal{U_{\ominus}}}+\,
\mathcal{N}_{\bullet}\cdot\, S(\mathcal{P}),\\
\mathcal{U_{\oplus}} &= \mathcal{Z}_{\Box} +\,
\mathcal{N}_{\ominus}\cdot\operatorname{Seq}_{\geq 2}{\mathcal{U}_{\ominus}} +\,
\mathcal{N}_{\bullet}\cdot\, S(\mathcal{P}),\\
\mathcal{U_{\ominus}} &= \mathcal{Z}_{\Box} +\,
\mathcal{N}_{\oplus}\cdot\operatorname{Seq}_{\geq 2}{\mathcal{U}_{\oplus}} +\,
\mathcal{N}_{\bullet}\cdot\, S(\mathcal{P}).
\end{aligned}
\end{equation}
Above, the class~$\mathcal{Z}$ is an atomic class with a single
element of size $1$, the~$\mathcal{N}$ classes are all epsilon
classes containing a single element of size $0$, marking internal nodes, 
and the function $S(z) = \sum_{j \geq 4} s_j z^j$ is the generating
function for simple permutations. 
\label{thm:AA05}
\end{theorem}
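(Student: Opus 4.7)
The plan is to establish the specification by a case analysis on the root of a strong interval tree, using the bijection with permutations (and the characterization of strong interval trees as labeled plane trees) that has been developed in the preceding subsection following~\cite{AlAt05}. The bijection itself has already been described constructively, so the task here is really just to verify that the classes $\mathcal{P}$, $\mathcal{U}_\oplus$, $\mathcal{U}_\ominus$ satisfy the announced system of combinatorial equations, and that the number of leaves in the tree equals the size of the associated permutation.

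First I would treat $\mathcal{P}$. A strong interval tree is either a single leaf (in bijection with the permutation $1$ of size~$1$, contributing $\mathcal{Z}_\Box$), or its root is an internal node of arity at least~$2$, carrying one of three possible types of labels: $\oplus$, $\ominus$, or a simple permutation $\tau$ of size $\geq 4$. For a $\oplus$-rooted tree, the children appear in a sequence, are at least two in number, and by the alternation property established just before the statement, none of them can itself be $\oplus$-rooted. This is exactly $\mathcal{N}_\oplus \cdot \operatorname{Seq}_{\geq 2}(\mathcal{U}_\oplus)$, provided $\mathcal{U}_\oplus$ is defined as the subclass of $\mathcal{P}$ whose root is not labeled $\oplus$. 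The $\ominus$ case is symmetric. For a prime root labeled by a simple permutation $\tau$ of size $j \geq 4$, the tree has exactly $j$ ordered children, each an arbitrary element of $\mathcal{P}$, and $\tau$ itself may be chosen among the $s_j$ simple permutations of size $j$; summing over $j$ gives precisely $\mathcal{N}_\bullet \cdot S(\mathcal{P})$.

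Next I would derive the equations for $\mathcal{U}_\oplus$ and $\mathcal{U}_\ominus$. These come for free: $\mathcal{U}_\oplus$ is obtained from the decomposition of $\mathcal{P}$ above by suppressing the $\oplus$-rooted summand, yielding the second equation, and similarly $\mathcal{U}_\ominus$ suppresses the $\ominus$-rooted summand. Finally, that the specification is size-preserving is straightforward: only $\mathcal{Z}_\Box$ (a leaf) contributes to size, the $\mathcal{N}$ classes marking internal nodes are neutral, and under the bijection each leaf corresponds to exactly one element of the underlying permutation.

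The main (and really only) obstacle is the justification that every labeled plane tree satisfying the local constraints encoded by the system is actually the strong interval tree of some permutation, and conversely that every permutation yields such a tree; but this has already been treated in the preceding subsection, where the constructive bijection and the alternation rule between consecutive linear nodes of the same sign are recalled from~\cite{AlAt05,BXHaPa05}. Thus the theorem is just the combinatorial translation of that bijection into the symbolic method.
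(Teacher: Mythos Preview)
Your proposal is correct and matches the paper's approach: the theorem is stated there as a reformulation of~\cite{AlAt05} with no separate proof, relying entirely on the bijection and the characterization of strong interval trees (including the alternation rule for linear nodes) developed in the preceding subsection. Your case analysis on the root label and your identification of $\mathcal{U}_\oplus$, $\mathcal{U}_\ominus$ as the subclasses with non-$\oplus$ (resp.\ non-$\ominus$) root are exactly the translation into the symbolic method that the paper leaves implicit.
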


Notice that $\mathcal{U}_{\oplus}$ and $\mathcal{U}_{\ominus}$ define combinatorial classes which are in size-preserving bijection. 
In the following, in order to deal with one class instead of two, we replace them by the equivalent class 
$\mathcal{U} = \mathcal{Z}_{\Box} +\, \mathcal{N}_{\circ}\cdot\operatorname{Seq}_{\geq 2}{\mathcal{U}} +\,\mathcal{N}_{\bullet}\cdot\, S(\mathcal{P})$. 
Doing so, we change the labels of the linear nodes having a linear parent (replacing them by $\circ$). 
This does not affect the enumeration of the class. 
Indeed, these labels are determined since a linear node and its linear parent have different labels.

It is not hard to view $\mathcal{U}$ as a family of trees 
not unlike those of studied in Section~\ref{sec:General}: 

\begin{corollary}
The following combinatorial equivalences are true:

\greybox{
\begin{equation*}
\mathcal{P}\equiv \operatorname{Seq}_{\geq 1}{\mathcal{U}} \qquad \text{and} \qquad
\mathcal{U}\equiv \mathcal{Z} + \operatorname{Seq}_{\geq
  2}{\mathcal{U}} + S(\operatorname{Seq}_{\geq 1}{\mathcal{U}}).
\end{equation*}
}
Consequently, $\mathcal{U}$ is in bijection with a class of $\Lambda$-trees, 
or in other words its generating function $U(z)$ satisfies $U(z) = z + \Lambda(U(z))$, 
for $\Lambda(x)= \frac{x^2}{1-x} +\sum_{j\geq 4} s_j \left(\frac{x}{1-x}\right)^j$, 
where $s_j$ is the number of simple permutations of size $j$. 
\end{corollary}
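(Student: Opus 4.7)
My plan is to derive the two combinatorial equivalences by algebraic manipulation of the system of Theorem~\ref{thm:AA05}, and then to read off the $\Lambda$-tree equation from the second one. The starting point is the simplified system one obtains after merging $\mathcal{U}_{\oplus}$ and $\mathcal{U}_{\ominus}$ into the single class $\mathcal{U}$ and dropping the neutral $\mathcal{N}$ markers (which have size~$0$ and one element, hence contribute $1$ to the generating function):
\begin{align*}
\mathcal{P} &\equiv \mathcal{Z}+2\cdot\operatorname{Seq}_{\geq 2}{\mathcal{U}}+S(\mathcal{P}),\\
\mathcal{U} &\equiv \mathcal{Z}+\operatorname{Seq}_{\geq 2}{\mathcal{U}}+S(\mathcal{P}),
\end{align*}
where the factor $2$ in the first line reflects the two possible labels $\oplus,\ominus$ at the root of a $\mathcal{P}$-tree.

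For the first equivalence $\mathcal{P}\equiv\operatorname{Seq}_{\geq 1}{\mathcal{U}}$, I would decompose $\operatorname{Seq}_{\geq 1}{\mathcal{U}}\equiv \mathcal{U}+\operatorname{Seq}_{\geq 2}{\mathcal{U}}$ and substitute the $\mathcal{U}$-equation into the first summand. The $\operatorname{Seq}_{\geq 2}$-term then appears twice and combines with the $\mathcal{Z}+S(\mathcal{P})$ contribution to reproduce exactly the right-hand side of the $\mathcal{P}$-equation. The second equivalence is then immediate: it suffices to plug $\mathcal{P}\equiv\operatorname{Seq}_{\geq 1}{\mathcal{U}}$ back into the second equation of the simplified system.

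To finish, I would translate into generating functions using the standard identities $[\operatorname{Seq}_{\geq k}{\mathcal{U}}](z)=U(z)^k/(1-U(z))$ and the definition $S(y)=\sum_{j\geq 4}s_j y^j$. The second equivalence becomes
\[
U(z)=z+\frac{U(z)^2}{1-U(z)}+\sum_{j\geq 4}s_j\left(\frac{U(z)}{1-U(z)}\right)^j,
\]
which is exactly $U(z)=z+\Lambda(U(z))$ for the announced $\Lambda$. I would also remark in passing that $\Lambda$ has nonnegative Taylor coefficients and satisfies $\Lambda(0)=\Lambda'(0)=0$, so that the hypotheses of shape required by Theorem~\ref{thm:main} are in place for later use.

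I do not foresee a genuine obstacle: the whole argument is essentially bookkeeping around the relabeling step. The only point meriting a sentence of justification is the reading of $\equiv$: collapsing $\mathcal{U}_{\oplus}$ and $\mathcal{U}_{\ominus}$ into $\mathcal{U}$ deliberately forgets the information distinguishing $\oplus$- and $\ominus$-rooted trees in $\mathcal{P}$, so $\equiv$ should be interpreted as equality of enumerative structures (same count at each size), which is exactly what is needed to obtain the functional equation for $U(z)$.
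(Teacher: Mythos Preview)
Your proposal is correct and matches the paper's approach exactly: the paper's proof is a single sentence invoking the intermediary equivalence $\mathcal{P}\equiv\mathcal{U}+\operatorname{Seq}_{\geq 2}\mathcal{U}$, which is precisely your decomposition $\operatorname{Seq}_{\geq 1}\mathcal{U}\equiv\mathcal{U}+\operatorname{Seq}_{\geq 2}\mathcal{U}$ followed by substitution of the $\mathcal{U}$-equation. Your write-up simply fills in the details the paper leaves implicit, and your closing remark on the shape of $\Lambda$ is a useful anticipation of Theorem~\ref{thm:main} that the paper does not include here.
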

\begin{proof}
This equivalence is derived from Equation~\eqref{eq:initialsystem}, 
the fact that $\mathcal{U} \equiv \mathcal{U}_{\oplus} \equiv \mathcal{U}_{\ominus}$,
and the intermediary equivalence $\mathcal{P} \equiv \mathcal{U}+
\operatorname{Seq}_{\geq 2}{\mathcal{U}}$.
\end{proof}

There is however an important difference between $\mathcal{U}$ and the set of classes that are covered by Theorem~\ref{thm:main}: 
the function $\Lambda$ defined by $\Lambda(x)= \frac{x^2}{1-x} +\sum_{j\geq 4} s_j \left(\frac{x}{1-x}\right)^j$ is not analytic at the origin. 
This is due to $S(z)$, the generating function for simple permutation, not being analytic at the origin. 
This somewhat undesirable property follows from an enumerative
study of simple permutations done by Albert \emph{et al.}~\cite{AlAtKl03}. 
We can, however, make use of their asymptotic enumeration formulas, which we recall below. 

Recall that~$s_n$ denotes the number of simple permutations of size~$n$. 
The sequence $(s_n)$ has label~A111111 in the On-Line Encyclopedia of Integer Sequences~\cite{OEIS-simple}. 
This sequence is not P-recursive, but it does satisfy a simple functional inversion formula (see~\cite{AlAtKl03}), 
and we have calculated exact values of $s_n$ for $n<800$. 
Albert \emph{et al.\/}~\cite{AlAtKl03} determined the following bounds:
\begin{equation}
\label{eq:bounds}
\frac{n!}{e^2}\left(1-\frac{4}{n}\right) \leq s_n\leq \frac{n!}{e^2}\left(1-\frac{4}{n}+\frac{2}{n(n-1)}\right).
\end{equation}
Here are the first few terms in the generating function for simple permutations:
\begin{equation*}
S(z)=2z^4+6z^5+46z^6+338z^7+ 2926z^8+ 28146z^9+ 298526z^{10}+ 3454434z^{11}+\dots
\end{equation*}

Because $S(z)$, and hence $\Lambda(x)$, are not analytic at the origin, 
neither $\mathcal{P}$ nor $\mathcal{U}$~are simple varieties of trees whose analysis is covered by Section~\ref{sec:General}. 
Of course, $\mathcal{P}$ being in bijection with permutations, 
this gives access to a very good understanding of $\mathcal{P}$ 
(in particular, enumeration results and random generation tools). 

However, very shortly we propose a different strategy for studying $\mathcal{P}$: 
we describe a filtration $(\mathcal{P}^{(k)})_{k \geq 4}$
such that each $\mathcal{P}^{(k)}$ is built in a straightforward manner from a simple variety of trees $\mathcal{U}^{(k)}$. 
This makes these subclasses $\mathcal{P}^{(k)}$ easy to analyze,
particularly given the generic analysis we have completed in Section~\ref{sec:General}. 
This is done in Section~\ref{sec:restrictedCIT}. 
Next, we view the entire class $\mathcal{P}$ as the (combinatorial) limit of the filtration. 
As explained in further details in Section~\ref{sec:filtration}, one of our goals is to understand how much this strategy 
can yield concerning the asymptotic behavior of $\mathcal{P}$, from that of the subclasses $\mathcal{P}^{(k)}$.

\section{Prime-Degree Restricted Strong Interval Trees}
\label{sec:restrictedCIT}

The filtration of the class of trees $\mathcal{P}$ that we consider 
consists in bounding the maximal arity of prime nodes. 
As we shall see, the results of Section~\ref{sec:General} are applicable to the subclasses of $\mathcal{P}$ where the arity of prime nodes is bounded.
Our motivation for studying this restriction of strong interval trees is twofold. 

First, as indicated above, this gives an example of a non-analytic class which is the limit of a sequence of families of trees which are almost simple varieties of trees: 
we believe this example is instructive and opens the way to adapting this strategy to study other non-analytic classes, 
as we discuss in Section~\ref{sec:filtration}. 

Our second motivation comes from the study of genome rearrangements, specifically in the model of perfect sorting by reversals. 
Indeed, as shown in~\cite{BeBeChPa07,BoChMiRo11}, the algorithmic complexity of finding an evolutionary scenario in this model 
depends heavily on the maximal arity of prime nodes in the strong interval trees of the permutations that encodes the genomes (recording the order of the genes): 
the smaller this maximal arity, the more efficient the algorithm. 
Based on biological data for mammalian genomes~\cite{ChMcMi11}, it appears that prime nodes occur relatively rarely, and are of small arity. 
In~\cite{BoChMiRo11}, the combinatorics of strong interval trees without any prime nodes was investigated, 
resulting in a better understanding of the so-called \emph{commuting scenarios}. 
Now allowing some prime nodes to occur, but with a bounded arity, we are going a step further in this analysis, 
while focusing on subclasses of strong interval trees that seem to represent well the biological data.

\subsection{The filtration for permutations}

We define the class $\mathcal{P}^{(k)}$ as follows, where $S^{\leq k}(z)=\sum_{j=4}^ks_jz^j$:
\begin{equation*}
\mathcal{P}^{(k)} = \mathcal{Z} + 2\operatorname{Seq}_{\geq 2}{\mathcal{U}^{(k)}} + S^{\leq k}(\mathcal{P}^{(k)})\qquad
\text{and} \qquad
\mathcal{U}^{(k)} = \mathcal{Z} + \operatorname{Seq}_{\geq 2}{\mathcal{U}^{(k)}} + S^{\leq k}(\mathcal{P}^{(k)}).
\end{equation*}

That is, only prime nodes of arity at most $k$ are allowed. 
We refer to the classes denoted by $\mathcal{P}^{(k)}$, as \emph{prime-degree restricted\/} strong interval trees. 

The containment $\mathcal{P}^{(k)}\subset\mathcal{P}^{(k+1)}$ is straightforward, and
since $\mathcal{P}^{(k)}_n = \mathcal{P}_n$ when $k\geq n$, we can
derive the limit of combinatorial classes $\lim_{k\rightarrow \infty}
\mathcal{P}^{(k)}=\mathcal{P}$.

Furthermore, by the same manipulations as for the full class, we derive:
\begin{equation}
\label{Pk-modified}
\mathcal{P}^{(k)} \equiv  \operatorname{Seq}_{\geq 1}{\mathcal{U}^{(k)}}\qquad \text{and} \qquad
\mathcal{U}^{(k)} \equiv \mathcal{Z} + \operatorname{Seq}_{\geq
  2}{\mathcal{U}^{(k)}} + S^{\leq k}(\operatorname{Seq}_{\geq
  1}{\mathcal{U}^{(k)}}).
\end{equation}

Again similarly to the case of the full class, remark that $\mathcal{U}^{(k)}$ is isomorphic to a $\Lambda_k$-tree with
$\Lambda_k(x)=\frac{x^2}{1-x}+ \sum_{j=4}^k s_j \left(\frac{x}{1-x}\right)^j$. 
This class is certainly algebraic and is a simple variety of trees.
The enumerative analysis of Section~\ref{sec:General} applies directly
to these families of trees $\mathcal{U}^{(k)}$, then giving access to enumeration 
and parameter average behavior for $\mathcal{P}^{(k)}$ also, 
even if $\mathcal{P}^{(k)}$ is not itself a simple variety of trees. 
This is done in the remaining part of this section, focusing on applications to the study of genome rearrangements. 
Also, keeping in mind our next goal of letting $k$ go to infinity to recover the class $\mathcal{P}$, 
we would like to preserve $k$ as much as possible in the formulas.

%
%

\subsection{Asymptotic enumeration}

The equations~\eqref{Pk-modified} allow us to directly apply
Theorem~\ref{thm:main} to determine asymptotic formulas for the
coefficients of the generating functions $P^{(k)}$ of the classes $\mathcal{P}^{(k)}$.

\begin{theorem} For fixed $k$, the number of prime-degree restricted
  strong interval trees of size $n$, denoted $P^{(k)}_n$, grows
  asymptotically like
  
\greybox{
\begin{equation}\label{eq:pkn-sim}
P^{(k)}_n  \sim\frac{\gamma_k}{(1 - \tau_k)^2} \rho_k^{-n}n^{-3/2}\quad\text{ as }n\rightarrow \infty, \quad \text{where} \quad \gamma_k=\sqrt{\frac{\rho_k}{2\pi\Lambda_k''(\tau_k)}}.
\end{equation}

Here, $\Lambda_k(x)=\frac{x^2}{1-x}+\sum_{j=4}^k s_j
(\frac{x}{1-x})^j$, $\tau_k$ satisfies
  $1-\Lambda_k'(\tau_k)=0$ and $\rho_k=\tau_k-\Lambda_k(\tau_k)$. 
}
\label{thm:asym_pnk}
\end{theorem}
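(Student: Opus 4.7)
The plan is to apply Theorem~\ref{thm:main} to $\mathcal{U}^{(k)}$ and then transfer the result through the sequence construction $\mathcal{P}^{(k)}\equiv\operatorname{Seq}_{\geq 1}{\mathcal{U}^{(k)}}$. The main ingredient is already in place: the simple variety of trees $\mathcal{U}^{(k)}$ is governed by $U^{(k)}(z)=z+\Lambda_k(U^{(k)}(z))$, and I need to check the hypotheses of Theorem~\ref{thm:main} apply to $\Lambda_k$ and then propagate the singular expansion.

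First, I would verify the hypotheses for $\Lambda_k$. The function $\Lambda_k(x)=\frac{x^2}{1-x}+\sum_{j=4}^k s_j\left(\frac{x}{1-x}\right)^j$ is a rational function analytic on $|x|<1$, so $R_{\Lambda_k}=1$. Expanding the geometric series, all Taylor coefficients are non-negative, and the expansion starts at $x^2$, so $\lambda_0=\lambda_1=0$. As $x\to 1^-$, $\Lambda_k'(x)\to+\infty>1$, so Theorem~\ref{thm:main} furnishes a unique $\tau_k\in(0,1)$ with $\Lambda_k'(\tau_k)=1$, a unique dominant singularity $\rho_k=\tau_k-\Lambda_k(\tau_k)$ of $U^{(k)}$, and the singular expansion
\[
U^{(k)}(z)=\tau_k-\sqrt{\frac{2\rho_k}{\Lambda_k''(\tau_k)}}\sqrt{1-z/\rho_k}+\mathcal{O}\!\left(1-z/\rho_k\right).
\]
Aperiodicity follows because the expansion of $\frac{x^2}{1-x}$ already contributes consecutive powers $x^2,x^3,\ldots$ with positive coefficients, so the support of $\Lambda_k$ is not contained in any non-trivial arithmetic progression; hence
\[
[z^n]U^{(k)}(z)\sim\gamma_k\,\rho_k^{-n}\,n^{-3/2},\qquad \gamma_k=\sqrt{\frac{\rho_k}{2\pi\Lambda_k''(\tau_k)}}.
\]

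Next, I pass from $\mathcal{U}^{(k)}$ to $\mathcal{P}^{(k)}$ using $P^{(k)}(z)=\dfrac{U^{(k)}(z)}{1-U^{(k)}(z)}$. Because $U^{(k)}$ has non-negative coefficients, $|U^{(k)}(z)|\le U^{(k)}(\rho_k)=\tau_k<1$ on $|z|\le\rho_k$, so $1-U^{(k)}(z)$ does not vanish on the closed disk of radius $\rho_k$; therefore $P^{(k)}$ inherits $\rho_k$ as its unique dominant singularity, and is analytic in a $\Delta$-domain at $\rho_k$ whenever $U^{(k)}$ is. Substituting the singular expansion of $U^{(k)}$ into $P^{(k)}$ and expanding to first order in $\sqrt{1-z/\rho_k}$, a short computation gives
\[
P^{(k)}(z)=\frac{\tau_k}{1-\tau_k}-\frac{1}{(1-\tau_k)^2}\sqrt{\frac{2\rho_k}{\Lambda_k''(\tau_k)}}\sqrt{1-z/\rho_k}+\mathcal{O}(1-z/\rho_k).
\]
Applying the Transfer Theorem using $[z^n]\sqrt{1-z/\rho_k}\sim-\frac{1}{2\sqrt{\pi}}\rho_k^{-n}n^{-3/2}$ and noting that $\sqrt{2\rho_k/\Lambda_k''(\tau_k)}=2\sqrt{\pi}\,\gamma_k$ yields
\[
[z^n]P^{(k)}(z)\sim\frac{\gamma_k}{(1-\tau_k)^2}\,\rho_k^{-n}\,n^{-3/2},
\]
which is the claimed formula.

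None of the steps present a serious obstacle; the analysis is a routine application of the machinery of Section~\ref{sec:General}. The only slightly delicate points are checking aperiodicity of $\Lambda_k$ and confirming that $1-U^{(k)}(z)$ is non-vanishing on the closed disk of radius $\rho_k$ (so that the sequence construction preserves the singularity type rather than introducing a new dominant pole). Both follow cleanly from $\tau_k<R_{\Lambda_k}=1$ and the positivity of coefficients. I would emphasize in the writeup that keeping $\rho_k$, $\tau_k$ and $\Lambda_k''(\tau_k)$ symbolic is deliberate: these are the quantities whose behavior as $k\to\infty$ governs the convergence phenomenon studied in Section~\ref{sec:filtration}.
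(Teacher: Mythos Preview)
Your proof is correct and follows essentially the same route as the paper: apply Theorem~\ref{thm:main} to $\mathcal{U}^{(k)}$, then push the singular expansion through $P^{(k)}(z)=U^{(k)}(z)/(1-U^{(k)}(z))$ using that $\tau_k<1$ keeps the composition subcritical, and conclude by the Transfer Theorem. The paper invokes subcriticality by name (referring to \cite[paragraph VI.9]{FlSe09}) where you instead argue directly that $1-U^{(k)}(z)$ does not vanish on the closed disk of radius~$\rho_k$, but the substance is identical.
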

\begin{proof}
First, we note that since $\sum_{j=4}^k s_j
(\frac{x}{1-x})^j$ is a polynomial in $\frac{x}{1-x}$,
$\Lambda_k$ is certainly analytic at $0$. 
The radius of convergence of $\Lambda_k$ is easily seen to be $1$, and 
$\lim_{x \rightarrow 1^-} \Lambda_k'(x) = +\infty$. 
Hence, Theorem~\ref{thm:main} gives that at $\rho_k$, it holds that:
\[
U^{(k)}(z) = \tau_k - \beta_k \sqrt{1-\frac{z}{\rho_k}} + \mathcal{O}(1-z/\rho_k) \quad \textrm{ with } \beta_k = \sqrt{\frac{2 \rho_k}{\Lambda''(\tau_k)}}.
\]

Note that the enumerative formulas of the first section also yield the 
asymptotic estimate $U^{(k)}_n\sim \gamma_k \rho_k^{-n}n^{-3/2}$ where $\gamma_k=\frac{\beta_k}{2\sqrt{\pi}} = \sqrt{\frac{\rho_k}{2\pi\Lambda''(\tau_k)}}$.

Next, we note that by the first relation in Equation~\eqref{Pk-modified},
$P^{(k)}(z)=\frac{U^{(k)}(z)}{1-U^{(k)}(z)}$. 
By Theorem~\ref{thm:main}, the value of $U^{(k)}(z)$ at its dominant singularity $\rho_k$ is $\tau_k$. 
Moreover, $\tau_k$ is less than the radius of convergence of $\Lambda_k$, \emph{i.e.}, $\tau_k <1$.
So the composition $P^{(k)}(z)=\frac{U^{(k)}(z)}{1-U^{(k)}(z)}$ is subcritical (see \cite[paragraph VI.9]{FlSe09}): 
this implies that the dominant singularity of $P^{(k)}(z)$ is also $\rho_k$, and that at $\rho_k$, we have:
\begin{align*}
P^{(k)}(z) & = \frac{\tau_k - \beta_k \sqrt{1-\frac{z}{\rho_k}} + \mathcal{O}(1-z/\rho_k)}{1 - \tau_k + \beta_k \sqrt{1-\frac{z}{\rho_k}} + \mathcal{O}(1-z/\rho_k)}  \\
& = \frac{\tau_k}{1 - \tau_k} \left( 1 - \frac{\beta_k}{\tau_k} \sqrt{1-\frac{z}{\rho_k}}\right) \left( 1 - \frac{\beta_k}{1- \tau_k} \sqrt{1-\frac{z}{\rho_k}}\right)+ \mathcal{O}(1-z/\rho_k)\\
& = \frac{\tau_k}{1 - \tau_k} - \frac{\beta_k}{(1 - \tau_k)^2} \sqrt{1-\frac{z}{\rho_k}} + \mathcal{O}(1-z/\rho_k).
\end{align*}
The classic Transfer Theorem of asymptotic enumeration then gives 
\[
P^{(k)}_n  \sim \frac{\beta_k}{(1 - \tau_k)^2} \frac{1}{2\sqrt{\pi n^3}} \rho_k^{-n} 
= \frac{\gamma_k}{(1-\tau_k)^2}\rho_k^{-n} n^{-3/2} \textrm{ \quad as claimed.} \qedhere
\]
\end{proof}

Note that along the proof of Theorem~\ref{thm:asym_pnk}, we have seen that $\tau_k <1$, 
an inequality that will be useful in Section~\ref{sec:filtration} 
to bound the asymptotic estimate of Equation~\eqref{eq:pkn-sim}.

\medskip

Table~\ref{tab:rho-tau} contains numeric approximations for $\tau_k$
and $\rho_k$ in the range $k=4\dots 13$.  Using these estimates gives
good asymptotic approximations and the enumerative formulas given in
Equation~\eqref{eq:pkn-sim} converge quickly for fixed $k$.  For
example, when $k=8$, our asymptotic formula is within 2\% of the
correct value at $n=10$.\footnote{Maple code to compute Table~\ref{tab:rho-tau}
is available at https://github.com/marnijulie/strong-interval-trees-maple}
\begin{table}[t]
\centering\small
\begin{tabular}{lll|lll}
$k$ & $\tau_k$ & $\rho_k$ &$k$ & $\tau_k$ &$\rho_k$\\ \hline
$4$&$0.2258458016$ & $0.1454726242$ & $9$& $0.1463252500$& $0.1102193554$ \\
$5$&$0.2043553556$ & $0.1364583031$ &$10$&$0.1375961304$ & $0.1057725121$ \\
$6$&$0.1841224072$ & $0.1277948168$ &$11$&$0.1300393555$&$0.1017629085$\\
$7$&$0.1689470150$ & $0.1210046262$ &$12$&$0.1234001218$& $0.09810173382$\\
$8$&$0.1565912704$ & $0.1152312243$ &$13$&$0.1174959122$& $0.09472586497$\\
\end{tabular}\newline
\caption{Computed approximate values for $\rho_k$ and $\tau_k$ for small values of $k$. }
\label{tab:rho-tau}
\end{table}

\subsection{Parameter analysis}

The average shape of general strong interval trees was described in~\cite{BoChMiRo11}. 
This study is essentially based on Equation \eqref{eq:bounds}, 
which shows that simple permutations make up about $1/9$ of all permutations. 
As a consequence, general strong interval trees have a very flat shape with probability tending to $1$, 
and this shape governs the average case behavior of any tree parameter. 
However, the prime-degree restricted trees are much more rich in this regards, 
and parameter analysis follows from Section~\ref{sec:General}. 

We focus here on some parameters which are to some extent linked to the perfect sorting scenarios for $\sigma$, 
that is, to parsimonious evolutionary scenarios in the model of perfect sorting by reversals 
(see~\cite{BoChMiRo11} for a detailed explanation of this connection). 
We will be specifically interested in 
the number of internal nodes (which is related to the number of reversals in a scenario), 
the number of prime nodes (since the complexity of computing a parsimonious scenario depends on it) 
and the average subtree size (which has a tight connection to the average reversal size). 
These parameters give important insight into the average case analysis of perfect sorting by reversals.

\medskip 

We have seen above that the generating function of $\mathcal{U}^{(k)}$ satisfies 
$U^{(k)}(z) = z + \Lambda_k(U^{(k)}(z))$ with 
$\Lambda_k(x)=\frac{x^2}{1-x}+ \sum_{j=4}^k s_j \left(\frac{x}{1-x}\right)^j$. 
Consequently, $\mathcal{U}^{(k)}$ is a simple variety of trees, 
and this allows to apply directly the results of Section~\ref{sec:General} 
for the average number of internal nodes or the average subtree size sum 
in $\mathcal{U}^{(k)}$ trees. 
The average number of prime nodes in $\mathcal{U}^{(k)}$ trees can also be derived using the general framework developed in Section~\ref{sec:General}. 
Then, the behaviour of these parameters in $\mathcal{P}^{(k)}$ trees is deduced from the already observed identity 
\begin{equation} 
\mathcal{P}^{(k)} = \mathcal{U}^{(k)} + \operatorname{Seq}_{\geq 2}{\mathcal{U}^{(k)}}. \label{eq:fromUtoP}
\end{equation}
Note that even though $\mathcal{P}^{(k)}$ is not a simple variety of trees, 
the behaviour of the studied parameters are of the same order as in such families of trees. 

The results proved in this section are summarized in Table~\ref{tab:summary_Pk}.

\begin{table}[t]
\greybox{
\begin{tabular}{lcll}
  The average number of internal nodes &  \qquad &$\frac{\tau_{k}-\rho_{k}}{\rho_{k}}\,n$\\[4mm]
  The average number of prime nodes  & \qquad &
  $\frac{S^{\leq k}(\tau_{k})}{\rho_{k}}\, n$\\[4mm]
  The average subtree size sum  & \qquad &
  $\frac{\beta_{k}^{2}}{4\rho_{k}\gamma_{k}}\, n^{3/2}$\\[2mm]
\end{tabular}}
\caption{A summary of asymptotic behavior of parameters for trees in $\mathcal{P}^{(k)}$.} 
\label{tab:summary_Pk}
\end{table}

\subsubsection{Number of internal nodes}

Let $U^{(k)}(z,y)$ (resp. $P^{(k)}(z,y)$) be the bivariate generating function of $\mathcal{U}^{(k)}$ trees (resp. $\mathcal{P}^{(k)}$ trees), 
where $z$ counts the size (\emph{i.e.}, the number of leaves) and $y$ counts the number of internal nodes. 
It follows from Eq.~\eqref{eq:fromUtoP} that 
\[
 P^{(k)}(z,y) = U^{(k)}(z,y) + y \cdot \frac{U^{(k)}(z,y)^2}{1-U^{(k)}(z,y)}.
\]
Consequently, we have 
\begin{align}\label{eq:Pderivate_internalNodes}
\frac{\partial}{\partial y} P^{(k)}(z,y) \Big|_{y=1} = \quad & \frac{\partial}{\partial y} U^{(k)}(z,y) \Big|_{y=1} 
+ \frac{2U^{(k)}(z,1)}{1-U^{(k)}(z,1)} \frac{\partial}{\partial y} U^{(k)}(z,y) \Big|_{y=1} \nonumber \\
& + \frac{U^{(k)}(z,1)^2}{1-U^{(k)}(z,1)}
+ \frac{U^{(k)}(z,1)^2}{(1-U^{(k)}(z,1))^2}\frac{\partial}{\partial y} U^{(k)}(z,y) \Big|_{y=1}. 
\end{align}
Near $\rho_k$, we know that 
$
U^{(k)}(z,1) = U^{(k)}(z) = \tau_k - \beta_k \sqrt{1-z/\rho_k} + \mathcal{O}(1-z/\rho_k) 
$.
The weaker estimate $U^{(k)}(z) = \tau_k + \mathcal{O}(\sqrt{1-z/\rho_k})$ gives 
$\frac{1}{1-U^{(k)}(z)} = \frac{1}{1-\tau_k} + \mathcal{O}(\sqrt{1-z/\rho_k})$, 
and these are enough to estimate all rational fractions in $U^{(k)}(z)$ that appear in Eq.\eqref{eq:Pderivate_internalNodes}. 
Moreover, the generating function $\frac{\partial}{\partial y} U^{(k)}(z,y) \Big|_{y=1} $ counts $\mathcal{U}^{(k)}$ trees 
weighted by their number of internal nodes. 
As seen in Section~\ref{sec:General}, it then follows from Lemma~\ref{lm:inv} that, near $\rho_k$, 
\[
\frac{\partial}{\partial y} U^{(k)}(z,y) \Big|_{y=1} = \frac{\beta_k (\tau_k - \rho_k)}{2\rho_k (1-z/\rho_k)^{1/2}} + o\left(\frac{1}{(1-z/\rho_k)^{1/2}}\right).
\]
Combining these asymptotic estimates gives, near $\rho_k$, 
\[
\frac{\partial}{\partial y} P^{(k)}(z,y) \Big|_{y=1} = \frac{\beta_k (\tau_k - \rho_k)}{2\rho_k (1-\tau_k)^{2}}\frac{1}{(1-z/\rho_k)^{1/2}} + o\left(\frac{1}{(1-z/\rho_k)^{1/2}}\right).
\]
Recalling the identity $\gamma_k=\frac{\beta_k}{2\sqrt{\pi}}$  
and the asymptotic behaviour of $[z^n]P^{(k)}(z,1) = P^{(k)}_n$ given in Theorem~\ref{thm:asym_pnk}, 
we deduce that the average number of internal nodes in $\mathcal{P}^{(k)}$ trees is 
\[
\frac{[z^n]\frac{\partial}{\partial y} P^{(k)}(z,y) \Big|_{y=1}}{[z^n]P^{(k)}(z,1)} \sim_{n \to \infty} 
\frac{\beta_k(\tau_k - \rho_k)}{2\rho_k (1-\tau_k)^{2} \sqrt{\pi n}} \rho_k^{-n} \cdot \frac{(1-\tau_k)^2}{\gamma_k\rho_k^{-n}} n^{3/2} = \frac{(\tau_k - \rho_k)}{\rho_k} \cdot n.
\]

\subsubsection{Number of prime nodes}

Like before, let us denote by $U^{(k)}(z,y)$ (resp. $P^{(k)}(z,y)$) the bivariate generating function of $\mathcal{U}^{(k)}$ trees (resp. $\mathcal{P}^{(k)}$ trees), 
counted by size (for $z$) and number of prime nodes (for $y$). 
We know an asymptotic estimates of $U^{(k)}(z,1) = U^{(k)}(z)$ near $\rho_k$, 
and we now apply the method of Section~\ref{sec:General} to compute one for $\frac{\partial}{\partial y} U^{(k)}(z,y) \Big|_{y=1}$. 

For any $\mathcal{U}^{(k)}$ tree $t$, let $\sigma(t) = \xi(t)$ denote the number of prime nodes in $t$ 
and let $\eta(t)$ be $1$ if the root of $t$ is a prime node, $0$ otherwise. 
With the notations of Lemma~\ref{lm:inv}, we have 
\begin{align*}
H(z) & = \sum_{t \in \mathcal{U}^{(k)}} \eta(t) z^{|t|} = S^{\leq k}(U^{(k)}(z)) \text{ \quad where } S^{\leq k}(u) = \sum_{j=4}^{k}s_{j}u^{j}  \\
\text{and \quad} \frac{\partial}{\partial y} U^{(k)}(z,y) \Big|_{y=1} & = \Xi(z) = H(z)\cdot \frac{\partial}{\partial z}U^{(k)}(z) 
= S^{\leq k}(U^{(k)}(z)) \cdot \frac{\partial}{\partial z}U^{(k)}(z).
\end{align*}
The asymptotic estimate of $U^{(k)}(z)$ near $\rho_k$ is 
$U^{(k)}(z) = \tau_k - \beta_k \sqrt{1-z/\rho_k} + \mathcal{O}(1-z/\rho_k)$, 
from which we deduce $U^{(k)}(z)^j = \tau_k^j + o(1)$. 
Moreover, Singular Differentiation gives, near $\rho_k$, 
\[
\frac{\partial}{\partial z}U^{(k)}(z) = \frac{\beta_k}{2\rho_k(1-z/\rho_k)^{1/2}} + o\left(\frac{1}{(1-z/\rho_k)^{1/2}}\right).
\]
Consequently, we obtain that near $\rho_k$, 
\[
\frac{\partial}{\partial y} U^{(k)}(z,y) \Big|_{y=1} = 
S^{\leq k}(\tau_k) \cdot \frac{\beta_k}{2\rho_k} \cdot \frac{1}{(1-z/\rho_k)^{1/2}}  + o\left(\frac{1}{(1-z/\rho_k)^{1/2}}\right).
\]

Now turning to $\mathcal{P}^{(k)}$ trees, Eq.~\eqref{eq:fromUtoP} implies that 
\[
 P^{(k)}(z,y) = U^{(k)}(z,y) + \frac{U^{(k)}(z,y)^2}{1-U^{(k)}(z,y)}.
\]
Differentiation gives 
\[
\frac{\partial}{\partial y} P^{(k)}(z,y) \Big|_{y=1} = \left( 1 + \frac{2U^{(k)}(z,1)}{1-U^{(k)}(z,1)} + \frac{U^{(k)}(z,1)^2}{(1-U^{(k)}(z,1))^2} \right) 
\frac{\partial}{\partial y} U^{(k)}(z,y) \Big|_{y=1}. 
\]
The asymptotic estimates obtained above give that, near $\rho_k$, 
\[
\frac{\partial}{\partial y} P^{(k)}(z,y) \Big|_{y=1} = 
S^{\leq k}(\tau_k) \cdot \frac{\beta_k}{2\rho_k (1-\tau_k)^2} \cdot \frac{1}{(1-z/\rho_k)^{1/2}}  + o\left(\frac{1}{(1-z/\rho_k)^{1/2}}\right).
\]
Finally, we deduce that the average number of prime nodes in $\mathcal{P}^{(k)}$ trees is 
\[
\frac{[z^n]\frac{\partial}{\partial y} P^{(k)}(z,y) \Big|_{y=1}}{[z^n]P^{(k)}(z,1)} \sim_{n \to \infty} 
\frac{S^{\leq k}(\tau_k) \cdot \beta_k }{2\rho_k (1-\tau_k)^2 \sqrt{\pi n}} \rho_k^{-n} \cdot \frac{(1-\tau_k)^2}{\gamma_k\rho_k^{-n}} n^{3/2} 
= \frac{S^{\leq k}(\tau_k)}{\rho_k} \cdot n.
\]
\subsubsection{Subtree size sum}

Again, we denote by $U^{(k)}(z,y)$ (resp. $P^{(k)}(z,y)$) the bivariate generating function of $\mathcal{U}^{(k)}$ trees (resp. $\mathcal{P}^{(k)}$ trees), 
counted by size (for $z$) and number of prime nodes (for $y$). In this case, Eq.~\eqref{eq:fromUtoP} gives 
\[
 P^{(k)}(z,y) = U^{(k)}(z,y) + \frac{U^{(k)}(zy,y)^2}{1-U^{(k)}(zy,y)}.
\] 
As before, we have $U^{(k)}(z,1) = U^{(k)}(z)$. 
Note also that $ \frac{\partial}{\partial z} U^{(k)}(z,y) \Big|_{y=1} = \frac{\partial}{\partial z}U^{(k)}(z)$. 
It follows that 
\begin{align*}
\frac{\partial}{\partial y} P^{(k)}(z,y) \Big|_{y=1} = &
\frac{\partial}{\partial y} U^{(k)}(z,y) \Big|_{y=1}  \\ 
+ &
\left(\frac{2U^{(k)}(z,1)}{1-U^{(k)}(z,1)} + \frac{U^{(k)}(z,1)^2}{(1-U^{(k)}(z,1))^2} \right) 
\left( z  \frac{\partial}{\partial z}U^{(k)}(z) + \frac{\partial}{\partial y} U^{(k)}(z,y) \Big|_{y=1}  \right)
\end{align*}
and we proceed like in the previous cases. 
Near $\rho_k$, the asymptotic estimate of $\frac{\partial}{\partial z}U^{(k)}(z)$ is 
\[
\frac{\partial}{\partial z}U^{(k)}(z) = \frac{\beta_k}{2\rho_k(1-z/\rho_k)^{1/2}} + o\left(\frac{1}{(1-z/\rho_k)^{1/2}}\right), 
\]
and we have seen in Section~\ref{sec:General} that 
\[
\frac{\partial}{\partial y} U^{(k)}(z,y) \Big|_{y=1} = \frac{\beta_k^2}{4\rho_k(1-z/\rho_k)} + o\left(\frac{1}{1-z/\rho_k}\right),
\]
since this function counts $\mathcal{U}^{(k)}$ trees weighted by their substree size sum. 
Consequently, the asymptotic estimate of $\frac{\partial}{\partial y} P^{(k)}(z,y) \Big|_{y=1}$ near $\rho_k$ is
\[
 \frac{\partial}{\partial y} P^{(k)}(z,y) \Big|_{y=1} = \frac{\beta_k^2}{4\rho_k (1-\tau_k)^2(1-z/\rho_k)} + o\left(\frac{1}{1-z/\rho_k}\right).
\]
We conclude that the average value of the subtree size sum in $\mathcal{P}^{(k)}$ trees is 
\[
\frac{[z^n]\frac{\partial}{\partial y} P^{(k)}(z,y) \Big|_{y=1}}{[z^n]P^{(k)}(z,1)} \sim_{n \to \infty} 
\frac{\beta_k^2}{4\rho_k (1-\tau_k)^2} \rho_k^{-n} \cdot  \frac{(1-\tau_k)^2}{\gamma_k\rho_k^{-n}} n^{3/2} 
= \frac{\beta_k^2}{4\rho_k\gamma_k} \cdot n^{3/2}.
\]

\subsection{Random generation}\label{sec:GRT}
Equation~\eqref{Pk-modified} gives immediate access to random sampling
of trees in~$\mathcal{P}^{(k)}$.  Thinking of the
classes~$\mathcal{P}^{(k)}$ as possible models for the biological data
collected in~\cite{ChMcMi11}, it is interesting to generate random
trees in~$\mathcal{P}^{(k)}$, to compare them with the trees obtained
from the data.  In this context, our interest is the global shape of
the trees, and not the particulars of the internal nodes.  We have
produced a Boltzmann generator which generates trees
in~$\mathcal{P}^{(k)}$ of size approximately $10000$ for~$k$ up to
$800$ without generating the simple permutation labels (prime and
linear nodes are however distinguished).  Figure~\ref{fig:random-k}
illustrates a randomly generated tree from~$\mathcal{P}^{(7)}$ with
approximately $1000$ leaves. Remark that the structure is dominated by
prime nodes of arity~$7$.
\begin{figure}[ht]
\includegraphics[width=14cm]{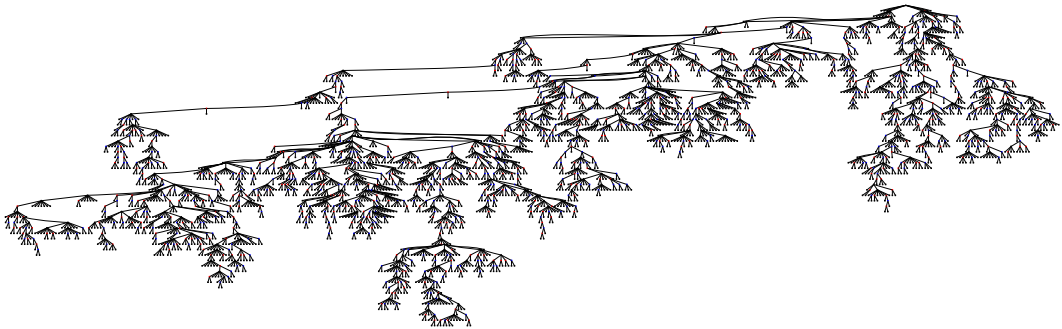}
\caption{A tree from~$\mathcal{P}^{(7)}$ generated uniformly at random}
\label{fig:random-k}
\end{figure}

The results of this random generation experiment are somehow disappointing, 
because the trees generated do not look like the trees obtained from the biological data. 
These have for instance significantly fewer prime nodes and are flatter. 
A careful statistical analysis would however be necessary to properly invalidate our proposed model. 
It would also set a solid basis for the comparison of other proposed models with the data, 
since this statistical analysis could then be reproduced for other subclasses of strong interval trees defined by finer restrictions. 

One of our long term goals on the biological side is to identify the very specific traits which arise
in permutations which encode mammalian genome comparisons, and to provide more adequate models. 
Chauve, McCloskey and Mishna~\cite{ChMcMi11} have taken some preliminary steps in this direction, 
and a reasonable model might use $\mathcal{P}^{(k)}$ trees as components to build realistic trees.
Note that permutations are very often used to model real-world problems, not only in biology: 
for instance also in sorting problems in algorithmic. 
We can therefore view this goal as part of a larger project: 
that of defining subclasses of permutations modeling ``well'' the permutations occurring in these real-world contexts
(using for instance strong interval trees), 
and of proving statistically that these models indeed represent well the data. 

\section{Studying a combinatorial class \emph{via} its filtration} 
\label{sec:filtration}

We now turn to the second aspect of the study of the classes~$\mathcal{P}^{(k)}$: 
understanding the convergence towards the full class $\mathcal{P}$, 
in particular at the level of the asymptotic number of trees in the class. 
The example of $\mathcal{P}$ we have in hand is a particularly instructive one, 
since the limit is known by other means, namely from the correspondence between $\mathcal{P}$ and permutations. 

This example is also meant to illustrate a more general goal, discussed further in Subsection~\ref{subsec:analytic}: 
that of analyzing classes of trees $\mathcal{C}$ via a filtration~$\mathcal{C}^{(k)}$ similar to~$\mathcal{P}^{(k)}$, 
especially when $\mathcal{C}$ fail to be a simple variety of trees 
because the series governing the number of children available (our $\Lambda$) is not analytic.

\subsection{From the asymptotic estimate of $P^{(k)}_n$ to the factorial}
\label{subsec:asymptotic}

Recall that $P^{(k)}_n$ denotes the number of strong interval trees with $n$ leaves and arity of prime nodes at most $k$. 
For any fixed $k$, the asymptotic behavior of~$P^{(k)}_n$ is given in Equation~\eqref{eq:pkn-sim}, 
and is of the form $\gamma \cdot \rho^{-n} \cdot n^{-3/2}$, which is typical for trees. 
However, taking the limit as $k$ tends to infinity, we recover the class of all strong interval trees, 
or equivalently of permutations, and hence an asymptotic behavior in $n! \sim \sqrt{2\pi} e^{-n} n^{n+1/2}$. 
Our goal is to reconcile those estimates. 

\medskip

Most of this section aims at producing an upper bound for the asymptotic estimate of~$P^{(k)}_n$ given in Equation~\eqref{eq:pkn-sim}. 
This is obtained by bounding $\rho_k$ and $\Lambda_k''(\tau_k)$. 
The first ingredient is a more explicit bound for $s_n$, the number of simple permutations of size $n$.

\begin{lemma}\label{lm:sn bound}
For every $n\geq 4$, $s_n \leq \sqrt{2\pi}\,n^{n+1/2}\,e^{-n-2}$.
\end{lemma}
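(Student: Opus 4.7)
The plan is to derive the inequality by combining the Albert--Atkinson--Klazar upper bound on $s_n$ recalled in Equation~\eqref{eq:bounds} with a non-asymptotic (Robbins-style) form of Stirling's formula. As a first simplification, I would rewrite the parenthesized factor in~\eqref{eq:bounds} in factored form:
$$1 - \frac{4}{n} + \frac{2}{n(n-1)} = \frac{n(n-1) - 4(n-1) + 2}{n(n-1)} = \frac{(n-2)(n-3)}{n(n-1)},$$
so that the bound becomes $s_n \leq \frac{n!}{e^2}\cdot\frac{(n-2)(n-3)}{n(n-1)}$.

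Next, I would invoke Robbins' bound $n! \leq \sqrt{2\pi n}\, n^n e^{-n}\, e^{1/(12n)}$, valid for every $n\geq 1$. Observing that the target can be rewritten as $\sqrt{2\pi}\, n^{n+1/2} e^{-n-2} = \sqrt{2\pi n}\, n^n e^{-n} / e^2$, the desired inequality reduces to the purely elementary statement
$$e^{1/(12n)} \cdot \frac{(n-2)(n-3)}{n(n-1)} \leq 1 \qquad\text{for every } n \geq 4.$$

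To verify this, I would apply $1-x \leq e^{-x}$ to each of the two factors $(n-2)/n = 1-2/n$ and $(n-3)/(n-1) = 1-2/(n-1)$, so that the left-hand side is at most $\exp\!\left(\tfrac{1}{12n} - \tfrac{2}{n} - \tfrac{2}{n-1}\right)$. Since $\tfrac{2}{n} \geq \tfrac{1}{12n}$ for every $n \geq 1$, the exponent is negative and the inequality follows.

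The argument is essentially routine; the only point deserving a bit of care is to use a genuine upper bound for $n!$ valid at every finite $n$ (such as Robbins'), and not merely the asymptotic equivalent $n! \sim \sqrt{2\pi n}\, n^n e^{-n}$, so that the conclusion holds for every $n \geq 4$ rather than just for sufficiently large~$n$.
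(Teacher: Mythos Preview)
Your proof is correct and follows essentially the same approach as the paper: both combine the Albert--Atkinson--Klazar upper bound from Equation~\eqref{eq:bounds} with Robbins' inequality $n!\leq\sqrt{2\pi}\,n^{n+1/2}e^{-n}e^{1/(12n)}$, reducing the claim to an elementary inequality handled via $1-x\leq e^{-x}$. The only cosmetic difference is that you factor $1-\tfrac{4}{n}+\tfrac{2}{n(n-1)}=\tfrac{(n-2)(n-3)}{n(n-1)}$ and apply $1-x\leq e^{-x}$ to each linear factor, whereas the paper applies it once to $e^{-1/(12n)}$ and finishes with the linear comparison $4-\tfrac{2}{n-1}\geq\tfrac{1}{12}$.
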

\begin{proof}
This inequality follows from Equation~\eqref{eq:bounds}, stating that 
$s_n \leq \frac{n!}{e^2}\left(1-\frac{4}{n}+\frac{2}{n(n-1)}\right)$,
and the following upper bound on $n!$: $n! \leq \sqrt{2\pi}n^{n+\frac12}e^{-n}\,e^{\frac1{12n}}$.
Combining these two, our claim will follow if we prove that 
$(1-\frac4n+\frac2{n(n-1)})e^{\frac1{12n}} \leq 1$ for $n\geq 4$.
This is equivalent to
$(1-\frac4n+\frac2{n(n-1)})\leq e^{-\frac1{12n}}$. 
And since $1-x\leq e^{-x}$, it is sufficient to prove that 
$1-\frac4n+\frac2{n(n-1)} \leq 1 -\frac1{12n} $, \emph{i.e.}, that 
$4-\frac2{n-1} \geq \frac1{12} $. 
This obviously holds for $n \geq 4$, concluding the proof. 
\end{proof}

From this estimate, the derivations of the bounds on~$\tau_k$ and~$\rho_k$ are relatively straightforward, but technical. 
Working with the value $\ttk=\frac{\tau_k}{1-\tau_k}$ simplifies the expressions. 
To derive those bounds, it is essential to keep in mind this sequence of inequalities, 
which follow from $\tau_k <1$ and $\rho_k = \tau_k - \Lambda_k(\tau_k)$:
\[
0<\rho_k<\tau_k<\ttk<1.
\]

\begin{proposition}[Bounds for $\ttk$]
\label{prop:bounds_tautilde}
For any $\alpha<\frac{e-2}{e-1}$, there exists $k(\alpha)$ such that
for $k>k(\alpha)$
\begin{equation*}
\left(\frac{\alpha}{ks_k}\right)^{\frac{1}{k-1}}<\ttk<\left(\frac{1}{ks_k}\right)^{\frac{1}{k-1}}.
\end{equation*}
Consequently,
\begin{equation*}
\frac{e}{k}\left(\frac{\alpha e^{3}}{\sqrt{2\pi}\,k^{5/2}}\right)^{\frac1{k-1}}<\ttk <\frac{e}{k} \left(\frac{e^3}{\sqrt{2\pi}\,k^{3/2}(k-4)}  \right)^{\frac{1}{k-1}} <\frac{e}{k}.
\end{equation*}
\end{proposition}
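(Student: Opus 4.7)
The plan is to extract from the defining equation $\Lambda_k'(\tau_k)=1$ a polynomial-type identity in $\ttk$ that singles out the role of the last coefficient $s_k$. Using $\frac{d}{dx}(\tfrac{x}{1-x})^j = \tfrac{j}{(1-x)^2}(\tfrac{x}{1-x})^{j-1}$ and $\frac{d}{dx}\tfrac{x^2}{1-x} = \tfrac{x(2-x)}{(1-x)^2}$, then multiplying through by $(1-\tau_k)^2$, I get
\begin{equation*}
\sum_{j=4}^k j s_j \ttk^{j-1} \;=\; (1-\tau_k)^2 - \tau_k(2-\tau_k) \;=\; 1-4\tau_k + 2\tau_k^2,
\end{equation*}
a quantity which lies in $(0,1]$ for $\tau_k\in(0,1)$. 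Both required bounds will follow from analysing this identity.

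The upper bound $\ttk<(1/(ks_k))^{1/(k-1)}$ is then immediate: since every summand is non-negative, the last one satisfies $ks_k\ttk^{k-1}\leq 1-4\tau_k+2\tau_k^2\leq 1$. For the lower bound $\ttk>(\alpha/(ks_k))^{1/(k-1)}$, I would prove that the sum is concentrated on its last term, so that $ks_k\ttk^{k-1}$ represents a fraction of $1-4\tau_k+2\tau_k^2$ that is, asymptotically, close to $(e-1)/e$ and in particular above $(e-2)/(e-1)$. Comparing consecutive summands and using the lower bound $s_{j+1}/s_j\geq j-3$ derived from Eq.~\eqref{eq:bounds}, the ratio $\frac{(j+1)s_{j+1}\ttk^{j}}{js_j\ttk^{j-1}}$ is at least $\sim j\ttk$. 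Inserting the preliminary upper bound on $\ttk$ already established (which, combined with Lemma~\ref{lm:sn bound} and Stirling's inequality, yields $\ttk\leq (e/k)(1+o(1))$) makes this ratio at least $e-\varepsilon$ for $j$ close to $k$, so that a geometric-type summation bounds the tail $\sum_{j<k}j s_j\ttk^{j-1}$ by $\frac{1}{e-1}(1+o(1))\cdot ks_k\ttk^{k-1}$. Combined with $\tau_k\to 0$, which itself follows from the upper bound, this gives the desired lower bound.

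The explicit second line of the statement is then purely computational. For the upper bound on $\ttk$ I substitute $s_k\geq k!(k-4)/(ke^2)$ from~\eqref{eq:bounds} and $k!\geq\sqrt{2\pi}k^{k+1/2}e^{-k}$; for the lower bound I use Lemma~\ref{lm:sn bound} directly. In each case I factor $e/k = (e^{k-1}/k^{k-1})^{1/(k-1)}$ out of $(1/(ks_k))^{1/(k-1)}$ and $(\alpha/(ks_k))^{1/(k-1)}$ to display the leading asymptotic, and the final inequality $\ttk<e/k$ holds as soon as $e^3<\sqrt{2\pi}k^{3/2}(k-4)$, which is satisfied from $k=5$ onward.

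The main obstacle is the bootstrap at the heart of the lower-bound argument. One must first commit to the upper bound $\ttk\lesssim e/k$ and then use it to lower-bound the ratios of consecutive summands, while simultaneously controlling the contribution of small indices $j$ where the ratio $j\ttk$ is not yet close to $e$. These tail indices have small consecutive ratios, but the factor $\ttk^{j-1}$ is exponentially smaller than $\ttk^{k-1}$ so they contribute negligibly; verifying that this cancellation is clean enough to yield a constant arbitrarily close to $(e-2)/(e-1)$ requires careful bookkeeping of the error terms.
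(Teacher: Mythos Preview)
Your derivation of the identity $\sum_{j=4}^k j s_j \ttk^{j-1} = 1 - 4\tau_k + 2\tau_k^2$ and your overall strategy---upper bound immediate from the last summand, lower bound by showing that same summand carries a definite fraction of the total, bootstrapping from $\ttk\lesssim e/k$---coincide with the paper's. (Minor slip: $1-4\tau_k+2\tau_k^2$ is not in $(0,1]$ for all $\tau_k\in(0,1)$; it vanishes at $1-\sqrt2/2\approx 0.293$. Positivity at the actual $\tau_k$ is of course forced by the identity itself, since the left-hand side is non-negative.)

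There is, however, a real error in the lower-bound sketch. You claim that for small $j$ ``the factor $\ttk^{j-1}$ is exponentially smaller than $\ttk^{k-1}$''. Since $0<\ttk<1$, the opposite holds: $\ttk^{j-1}>\ttk^{k-1}$ for $j<k$, and in fact $\ttk^{3}/\ttk^{k-1}\sim(k/e)^{k-4}$ explodes. The small-$j$ terms $j s_j \ttk^{j-1}$ are negligible because $s_j$ is bounded there (for fixed $j$ the term is $O(k^{-(j-1)})$), not because of the power of $\ttk$. The delicate range is the \emph{intermediate} $j$, where $s_j$ has already begun its factorial growth but the consecutive ratio $\sim j\ttk$ is not yet near $e$; your geometric comparison does not cover it, and the mechanism you offer to dismiss it points the wrong way. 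The paper resolves this by splitting at $k-\lfloor k^{1/3}\rfloor$: Lemma~\ref{lem:A} shows that $j\mapsto j s_j (e/k)^{j-1}$ is log-convex for $j\geq j_0$, so on $\{j_0,\dots,k-\lfloor k^{1/3}\rfloor-1\}$ each term is bounded by the larger of the two endpoint values, both of which are $O(k^{-3})$; Lemma~\ref{lem:B} then handles the last $\lfloor k^{1/3}\rfloor$ terms exactly by your geometric comparison with ratio tending to $e$, giving $\sum_{j<k}j s_j\ttk^{j-1}\leq \tfrac{1}{e-1}+o(1)$ and hence $k s_k\ttk^{k-1}\geq \tfrac{e-2}{e-1}+o(1)$.
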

Computational evidence suggests that $k(\alpha)=4$, for all $\alpha$ near $\frac{e-2}{e-1}$.

\begin{proof}
The starting point is the equation $\Lambda_k'(x)=1$, satisfied by $\tau_k$. 
Because $\Lambda_k(x) = \frac{x^2}{1-x}+\sum_{j=4}^k s_j (\frac{x}{1-x})^j$, 
it is convenient to consider this equation under the change of variables $y=\frac{x}{1-x}$, \emph{i.e.}, $x=\frac{y}{1+y}$. 
Notice that it implies $\frac{1}{(1-x)^2} = (1+y)^2$. 

Derivation gives $\Lambda_k'(x) = \frac{1}{(1-x)^2} -1 + \frac{1}{(1-x)^2} \sum_{j=4}^k j s_j(\frac{x}{1-x})^{j-1}$, so that 
the equation $\Lambda_k'(x)=1$ can be rewritten as 
\begin{equation}
\label{eq:lambda}
(1+y)^2 -1 + (1+y)^2\sum_{j=4}^k j s_j y^{j-1} = 1\quad \text{ which implies } \quad 
\frac{2-(1+y)^2}{(1+y)^2} = \sum_{j=4}^k j s_j y^{j-1}. 
\end{equation}
The next step towards proving the stated inequalities is 
the fact that for
$0<y<1$,  
$1-5y < \frac{2-(1+y)^2}{(1+y)^2} < 1$,
which is immediately proved by simple manipulations of inequalities. 
Indeed, we now observe that (by definition of $\ttk$), Equation~\eqref{eq:lambda} is satisfied at $y=\ttk$.
Consequently, these inequalities yield an upper and a lower bound for~$\sum_{j=4}^k j s_j \ttk^{j-1}$: 
\begin{equation}
1-5\ttk < \sum_{j=4}^k j s_j \ttk^{j-1} < 1.
\label{eqn:ttk}
\end{equation}

From Equation~\eqref{eqn:ttk}, we get $k s_k \ttk^{k-1} \leq \sum_{j=4}^k j s_j \ttk^{j-1} < 1$, 
from which the upper bound $\ttk < \left(\frac{1}{ks_k}\right)^{\frac{1}{k-1}}$ follows. 
From there, deriving $\ttk <\frac{e}{k} \left(\frac{e^3}{\sqrt{2\pi}\,k^{3/2}(k-4)}  \right)^{\frac{1}{k-1}}$ is then a routine exercise 
using $\frac{k!}{e^2}\left(1-\frac{4}{k}\right) \leq s_k$ (see Equation~\eqref{eq:bounds}) 
and Stirling's inequality $\left(\frac{k}{e}\right)^k \sqrt{2\pi k}\leq k!$. 
This quantity is no larger than $\frac{e}{k}$ as soon as $k\geq 5$. 
This concludes the part of the proof about upper bounds. 

\medskip

For the lower bounds, we start again from Equation~\eqref{eqn:ttk} above. 
We use the inequality
$ 1-5\ttk- \sum_{j=4}^{k-1} j s_j \ttk^{j-1} < k s_k \ttk^{k-1}$, 
and combine it with the bound $0 <\ttk \leq e/k = o(1)$, 
and an upper bound on $\sum_{j=4}^{k-1} j s_j \ttk^{j-1}$ obtained below. 
We split this sum as 
\[
\sum_{j=4}^{k-1} j\,s_j\,\ttk^{j-1} = 
\underbrace{\sum_{j=4}^{k-\iota_k-1} j\,s_j\,\ttk^{j-1}}_{A(k)} +  \underbrace{\sum_{j=k-\iota_k}^{k-1} j\,s_j\,\ttk^{j-1}.}_{B(k)}
\]
where $\iota_k = \lfloor k^{\frac13}\rfloor$. 
Note that $\iota_k$ an non-decreasing integer function of $k$ that tends to infinity and such that $\iota_k=o(\sqrt{k})$. 
Lemmas~\ref{lem:A} and~\ref{lem:B} below prove that 
\[ A(k) = 
\sum_{j=4}^{k-\iota_k-1}
j s_j \ttk^{j-1}=\mathcal{O}\left(\frac{1}{k^3}\right)
\qquad\text{ and that }\qquad
B(k) = \sum_{k-\iota_k}^{k-1}
j s_j \ttk^{j-1}=\frac{1}{e-1} + o(1).
\]
It follows that 
\[
k s_k \ttk^{k-1} > 1 - 5 \ttk - \sum_{j=4}^{k-1} j s_j \ttk^{j-1} = 1 - \frac1{e-1} +o(1) = \frac{e-2}{e-1} + o(1).
\]
Hence for any $\alpha < \frac{e-2}{e-1}$, there exists $k(\alpha)$ such that for any $k\geq k(\alpha)$, 
we have $k\,s_k\,\ttk^{k-1} > \alpha$, 
and therefore 
$\ttk > \left(\frac{\alpha}{k\,s_k}\right)^{\frac1{k-1}}$. 
To conclude the proof, we plug in the upper bound on $s_k$ from Lemma~\ref{lm:sn bound}. 
\end{proof}

\begin{lemma}
The quantity $A(k) = \sum_{j=4}^{k-\iota_k-1} j s_j \ttk^{j-1}$ defined in the proof of Proposition~\ref{prop:bounds_tautilde} 
satisfies $A(k)=\mathcal{O}\left(\frac{1}{k^3}\right)$. 
\label{lem:A}
\end{lemma}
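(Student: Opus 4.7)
The plan is to combine the two bounds already available---$\ttk \leq e/k$ established in the upper-bound part of Proposition~\ref{prop:bounds_tautilde}, and the estimate $s_j \leq \sqrt{2\pi}\,j^{j+1/2}e^{-j-2}$ of Lemma~\ref{lm:sn bound}---to produce, up to a constant, a clean upper bound on each term: $j s_j \ttk^{j-1} \leq \frac{\sqrt{2\pi}}{e^3}\, g(j)$ with
\[
g(j) \;:=\; \frac{j^{j+3/2}}{k^{j-1}} \;=\; k^{5/2}\left(\frac{j}{k}\right)^{j+3/2}.
\]
A direct computation shows that $\log g$ is convex in $j$ with its minimum near $j = k/e$; consequently $g$ first decreases on $[4, k/e]$ and then increases on $[k/e, k-\iota_k-1]$. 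The argument then proceeds by splitting the sum at a threshold $j_0 := \lfloor k/8 \rfloor$ (any small constant fraction of $k$ works) and treating the two ranges by different techniques.

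For the small-$j$ range $[4, j_0]$, I would compute
\[
\frac{g(j+1)}{g(j)} \;=\; \frac{j+1}{k}\left(1+\frac{1}{j}\right)^{j+3/2} \;\leq\; \frac{j+1}{k}\cdot e\cdot e^{3/(2j)},
\]
which is at most $\tfrac{1}{2}$ uniformly on this range as soon as $k$ is large enough. The terms therefore decay geometrically, and the sum is dominated by its first term, $g(4) = 4^{11/2}/k^3$, giving a contribution of $O(k^{-3})$---exactly the order we ultimately want.

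For the large-$j$ range $[j_0+1, k-\iota_k-1]$, I would use the uniform estimate $j/k \leq 1-(\iota_k+1)/k$ together with the monotonicity of $x\mapsto x^{j+3/2}$ on $[0,1]$ to obtain
\[
(j/k)^{j+3/2} \;\leq\; \bigl(1-(\iota_k+1)/k\bigr)^{j_0} \;\leq\; \exp\!\bigl(-(\iota_k+1)j_0/k\bigr) \;=\; O\!\bigl(e^{-\iota_k/8}\bigr).
\]
Each term is then $O(k^{5/2}e^{-\iota_k/8})$, and summing over fewer than $k$ indices yields $O(k^{7/2} e^{-k^{1/3}/8})$. Since $\iota_k = \lfloor k^{1/3}\rfloor \to \infty$, this decays faster than any polynomial in $1/k$, so it is negligible against the $O(k^{-3})$ contribution from the small-$j$ range.

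The main subtlety is that $g$ is \emph{not} monotone on the whole interval: near the right endpoint it grows back up to size $O(k^{5/2} e^{-\iota_k})$, which is much larger than $g(4) = O(k^{-3})$ for any fixed $k$, so a crude bound of the form $k \cdot \max_j g(j)$ would only deliver $O(k^{-2})$. What makes the argument work is exactly the splitting at $j_0$, combined with the particular growth rate $\iota_k = \lfloor k^{1/3} \rfloor$---large enough that exponential decay in $\iota_k$ absorbs the polynomial factor $k^{7/2}$, yet small enough that the range $[j_0+1, k-\iota_k-1]$ remains nonempty for large $k$.
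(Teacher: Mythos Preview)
Your proof is correct and follows a genuinely different route from the paper's.

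The paper also bounds each term $j s_j \ttk^{j-1}$ by $a_j := j s_j (e/k)^{j-1}$, but then argues structurally: it shows that the sequence $(j s_j)_{j\geq j_0}$ is \emph{log-convex} for some \emph{fixed} constant $j_0\geq 6$ (using the two-sided bounds on $s_j$), hence $(a_j)_{j\geq j_0}$ is log-convex as well and is bounded by the maximum of its endpoint values. This yields
\[
\sum_{j=4}^{k-\iota_k-1} a_j \;\leq\; \sum_{j=4}^{j_0-1} a_j \;+\; k\,a_{j_0} \;+\; k\,a_{k-\iota_k-1},
\]
and each of the three pieces is shown to be $\mathcal{O}(k^{-3})$: the first is a finite sum of terms each $\mathcal{O}(k^{-3})$, the second is $\mathcal{O}(k^{2-j_0})$, and the third decays super-polynomially by the same estimate you use at the right endpoint.

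Your argument avoids log-convexity altogether: you take a $k$-dependent split at $j_0=\lfloor k/8\rfloor$, prove geometric decay $g(j+1)/g(j)\leq 1/2$ directly from the ratio on the lower range (so that the sum is controlled by $g(4)=\mathcal{O}(k^{-3})$), and on the upper range you bound each term uniformly using only $j/k\leq 1-(\iota_k+1)/k$. This is arguably more elementary---it needs only the one-sided bound $s_j\leq \sqrt{2\pi}\,j^{j+1/2}e^{-j-2}$, not the two-sided bounds used to establish log-convexity. The paper's approach, on the other hand, isolates a clean structural property (log-convexity of $j s_j$) that could be reused elsewhere. One small expository point: your displayed inequality $(j/k)^{j+3/2}\leq (1-(\iota_k+1)/k)^{j_0}$ actually uses two monotonicity facts (in the base, then in the exponent since the base is $<1$ and $j+3/2>j_0$), not just the one you name; the bound is correct, but it would be clearer to say so.
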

\begin{proof}
It is convenient to define $b_j = j\,s_j$. 
For any $k$, since $\ttk < e/k$, $a_j = b_j \,e^{j-1}\,k^{1-j}$ is an upper bound on $j \,s_j \,\ttk^{j-1}$, so that $A(k) \leq \sum_{j=4}^{k-\iota_k-1} a_j$. 
In what follows, we prove that $\sum_{j=4}^{k-\iota_k-1} a_j =\mathcal{O}\left(\frac{1}{k^3}\right)$ 
(which is enough to conclude, since $A(k)>0$).

We claim that for some integer $j_0$, the sequence $(b_j)_{j\geq j_0}$ is log-convex. 
Indeed, for any $j\geq 6$,
$ \frac{b_j^2}{b_{j-1}b_{j+1}} = \frac{j^2}{(j-1)(j+1)}\frac{s_j^2}{s_{j+1}s_{j-1}}$, and 
Equation~\eqref{eq:bounds} then gives 
\begin{align*}
\frac{b_j^2}{b_{j-1}b_{j+1}}
& \leq \frac{j^2}{(j-1)(j+1)} \frac{j!^2\left(1-\frac{4}{j}+\frac2{j(j-1)}\right)^2}{(j-1)!(j+1)!\left(1-\frac{4}{j+1}\right)\left(1-\frac{4}{j-1}\right)}
= 1-\frac1j + \mathcal{O}\left(\frac1{j^2}\right). 
\end{align*}
In particular, there exists an integer $j_0\geq 6$ 
such that for any $j\geq j_0$, $\frac{b_j^2}{b_{j-1}b_{j+1}}<1$ and therefore
the sequence $(b_j)_{j\geq j_0}$ is log-convex. 

Now, note that for any $k$, $(a_j)_{j\geq j_0}$ is also log-convex, since $\frac{a_j^2}{a_{j-1}a_{j+1}} = \frac{b_j^2}{b_{j-1}b_{j+1}}$ for all $j$. 
The reason for considering the sequence $(b_j)$ instead of $(a_j)$ in the first place is to ensure that $j_0$ does not depend on $k$,
although the definition of $a_j = j\,s_j\,e^{j-1}\,k^{1-j}$ depends on $k$. 

\smallskip

Log-convex sequences are decreasing down to a given minimum then increasing, 
and therefore are bounded from above by the values reached at the extremities. Thus for all 
$ j\in \{j_0,\ldots,k-\iota_k-1\}, a_j \leq \max\{a_{j_0}, a_{k-\iota_k-1}\} \leq a_{j_0} + a_{k-\iota_k-1}$.
Consequently, 
\[
\sum_{j=4}^{k-\iota_k-1} a_j =  \sum_{j=4}^{j_0-1} a_j + \sum_{j=j_0}^{k-\iota_k-1} a_j  \leq \sum_{j=4}^{j_0-1} a_j + k\,a_{j_0} + k\,a_{k-\iota_k-1},
\]
and the result will follow if we find adequate upper bounds on each on these three terms, which we now do. 

\smallskip

For any $j\in\{4,\ldots j_0-1\}$, we have 
$ a_j = j\,s_j\,e^{j-1}\,k^{1-j} \leq j j! \,e^{j-1}\,k^{1-j}\leq j_0 j_0!\,e^{j_0-1}\,k^{-3}
$ so that $\sum_{j=4}^{j_0-1} a_j \leq j_0^2 j_0!\,e^{j_0-1}\,k^{-3} = \mathcal{O}(k^{-3})$.

For the term $k\,a_{j_0}$, we have $k\,a_{j_0} = j_0\,s_{j_0}\,e^{j_0-1}\,k^{2-j_0} =\mathcal{O}(k^{-3})$ since $j_0 \geq 6$. 
Using Lemma~\ref{lm:sn bound} and the fact that for all $x\in (0,1)$, $\log(1-x)<-x$, we obtain the bound for the last term. 
More precisely, we have:
\begin{align*}
k\,a_{k-\iota_k-1} & \leq k^2\cdot s_{k-\iota_k-1}\cdot e^{k-\iota_k-2}\cdot k^{2-k+\iota_k} \\
& \leq k^2\cdot \sqrt{2\pi}\cdot (k-\iota_k-1)^{k-\iota_k-1/2}\cdot e^{-k+\iota_k-1}\cdot e^{k-\iota_k-2}\cdot k^{2-k+\iota_k} \\
& \leq \frac{\sqrt{2\pi}}{e^3}k^{7/2}\cdot\left(1-\frac{\iota_k+1}{k}\right)^{k-\iota_k-1/2} \\
& \leq \frac{\sqrt{2\pi}}{e^3}k^{7/2}\cdot\exp\left((k-\iota_k-1/2)\log\left(1-\frac{\iota_k+1}{k}\right)\right)\\
& \leq \frac{\sqrt{2\pi}}{e^3}k^{7/2}\cdot\exp\left(-\frac{(k-\iota_k-1/2)(\iota_k+1)}k\right).
\end{align*}
The quantity in the exponential is asymptotically equivalent to $-\iota_k = -\lfloor k^{\frac13}\rfloor$. Hence
$k\,a_{k-\iota_k-1}$ decreases super-polynomially fast toward $0$, and is therefore a $\mathcal{O}(k^{-3})$ too.
\end{proof}

\begin{lemma}
The quantity $B(k) = \sum_{k-\iota_k}^{k-1} j s_j \ttk^{j-1}$ defined in the proof of Proposition~\ref{prop:bounds_tautilde} 
satisfies $B(k)=\frac{1}{e-1} +o(1)$. 
\label{lem:B}
\end{lemma}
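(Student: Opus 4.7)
The plan is to obtain a uniform asymptotic for each summand of $B(k)$, recognize the sum as nearly geometric with ratio $1/e$, and then fix the overall constant by coupling with the identity that defines $\ttk$.

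First, I would nail down the scale of $\ttk$ using only the upper bound already established in the proof of Proposition~\ref{prop:bounds_tautilde}: the estimate $\ttk \leq \frac{e}{k}\bigl(\frac{e^3}{\sqrt{2\pi}\,k^{3/2}(k-4)}\bigr)^{1/(k-1)}$ yields $\ttk = (e/k)(1 + O(\log k/k))$, and consequently $\ttk^{-i} = (k/e)^i(1+o(1))$ uniformly for $i \in \{1,\ldots,\iota_k\}$ with $\iota_k = \lfloor k^{1/3}\rfloor$, since $i\log k/k = O(k^{-2/3}\log k) \to 0$. Next, I would use Equation~\eqref{eq:bounds} together with Stirling to obtain $s_{k-i}/s_k = (1+o(1))/k^i$ uniformly in $i\leq\iota_k$, the uniformity coming from $\prod_{\ell=0}^{i-1}(1-\ell/k) = 1+O(i^2/k) = 1+o(1)$ for $i = O(k^{1/3})$.

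Combining these two ingredients with $(k-i)/k = 1+o(1)$ gives the key uniform ratio
\[
\frac{(k-i)\,s_{k-i}\,\ttk^{k-i-1}}{k\,s_k\,\ttk^{k-1}} \;=\; \frac{k-i}{k}\cdot\frac{s_{k-i}}{s_k}\cdot \ttk^{-i} \;=\; \frac{1+o(1)}{e^i}, \qquad i \in \{1,\ldots,\iota_k\}.
\]
Summing over $i$ and using $\sum_{i\geq 1}e^{-i} = 1/(e-1)$ together with the exponentially small tail $\sum_{i>\iota_k}e^{-i}$, this yields
\[
B(k) = k\,s_k\,\ttk^{k-1}\bigl(\tfrac{1}{e-1}+o(1)\bigr).
\]
The prefactor $k s_k \ttk^{k-1}$ is then pinned down by invoking the master identity $\sum_{j=4}^k j s_j \ttk^{j-1} = (2-(1+\ttk)^2)/(1+\ttk)^2 = 1 + o(1)$ from Equation~\eqref{eqn:ttk} combined with Lemma~\ref{lem:A}, giving a small system in $k s_k \ttk^{k-1}$ and $B(k)$ whose solution delivers the claimed asymptotic $B(k) = \tfrac{1}{e-1} + o(1)$.

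The main obstacle is ensuring that the $o(1)$ in the ratio estimate is truly uniform across the full range $i \in \{1,\ldots,\iota_k\}$. The cutoff $\iota_k = \lfloor k^{1/3}\rfloor$ is precisely the sweet spot: it is small enough that both $\prod_{\ell=0}^{i-1}(1-\ell/k) = 1+o(1)$ and $\ttk^{-i} = (k/e)^i(1+o(1))$ remain uniformly valid over all summands, yet large enough that the geometric tail past $\iota_k$ vanishes. Once these uniform estimates are in hand, summing and solving the small linking system is essentially routine.
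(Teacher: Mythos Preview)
The paper's proof and yours diverge substantially. The paper only establishes the \emph{upper} bound $B(k)\le\frac{1}{e-1}(1+o(1))$, obtained directly from the upper bound on $\ttk$ and Lemma~\ref{lm:sn bound}; this one-sided inequality is all that is used downstream in Proposition~\ref{prop:bounds_tautilde}. You instead aim for a genuine two-sided asymptotic via a ratio computation and a coupling system. Two issues arise.

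First, you assert that the upper bound $\ttk\le\frac{e}{k}\bigl(\frac{e^{3}}{\sqrt{2\pi}\,k^{3/2}(k-4)}\bigr)^{1/(k-1)}$ ``yields $\ttk=(e/k)(1+O(\log k/k))$''. A one-sided bound cannot produce a two-sided asymptotic; and in the paper's logical flow the lower bound on $\ttk$ is \emph{deduced from} Lemma~\ref{lem:B}, so importing it here would be circular. Your uniform estimate $\ttk^{-i}=(k/e)^{i}(1+o(1))$ for $i\le\iota_k$ genuinely requires a lower bound on $\ttk$ that you have not independently supplied. (It can be repaired: a crude a~priori bound such as $k\,s_k\,\ttk^{k-1}\ge 1/(2k)$, obtainable from the log-convexity established in Lemma~\ref{lem:A} together with Equation~\eqref{eqn:ttk}, already gives $\ttk\ge(e/k)(1+O(\log k/k))$ and hence the needed uniformity. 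But this step is missing from your proposal.)

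Second, even granting your method, solve your system: with $X=k\,s_k\,\ttk^{k-1}$ you have $B(k)=\frac{X}{e-1}(1+o(1))$ and $A(k)+B(k)+X=1+o(1)$ with $A(k)=o(1)$, hence $X\cdot\frac{e}{e-1}=1+o(1)$, so $X=\frac{e-1}{e}+o(1)$ and $B(k)=\frac{1}{e}+o(1)$, not $\frac{1}{e-1}+o(1)$. Since $1/e<1/(e-1)$ this is consistent with (indeed sharper than) the paper's proved upper bound, but it is not the constant you claim to recover.
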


\begin{proof}
With the change of variable $i=k-j$, we can write $B(k) = \sum_{i=1}^{\iota_k} (k-i)\,s_{k-i}\,\ttk^{k-i-1}$. 
By Lemma~\ref{lm:sn bound} and the upper bound on $\ttk$ proved in Proposition~\ref{prop:bounds_tautilde}, we have
\begin{align*}
(k-i)\,s_{k-i} & \leq \frac{\sqrt{2\pi}(k-i)^{k-i+3/2}}{e^{k-i+2}},\\
\ttk^{k-i-1} & \leq \frac{e^{k-i-1}}{k^{k-i-1}}\left(
\frac{e^{3}}{\sqrt{2\pi}\,k^{3/2}\left(k-4\right)}\right)\cdot\left(
\frac{e^{3}}{\sqrt{2\pi}\,k^{3/2}\left(k-4\right)}\right)^{\frac{-i}{k-1}}.
\end{align*}
Therefore
$
(k-i)\,s_{k-i}\,\ttk^{k-i-1} \leq \left(1-\frac{i}{k}\right)^{k-i+3/2}\left(
e^{-3}\sqrt{2\pi}\,k^{3/2}\left(k-4\right)\right)^{\frac{i}{k-1}}\cdot\frac{1}{1-\frac4{k}}
$. 
Since $i \leq \iota_k$ and $e^{-3}\sqrt{2\pi}\,k^{3/2}\left(k-4\right) \geq 1$ as soon as $k\geq 5$, we obtain that for $k\geq 5$,
\[
(k-i)\,s_{k-i}\,\ttk^{k-i-1} \leq \left(1-\frac{i}{k}\right)^{k-i+3/2}\left(
e^{-3}\sqrt{2\pi}\,k^{3/2}\left(k-4\right)\right)^{\frac{\iota_k}{k-1}}\cdot\underbrace{\frac{1}{1-\frac4{k}}}_{1+\mathcal{O}(\frac1k)}.
\]
Using again that $\log(1-x)<-x$ for $x\in (0,1)$, we have 
\[
\left(1-i/k\right)^{k-i+3/2} = e^{(k-i+3/2)\log\left(1-\frac{i}{k}\right)} \leq e^{ - \frac{(k-i+3/2)i}{k}} = e^{ - i +\frac{i^2}k -\frac{3i}{2k}}.
\]
Recalling that $i \leq \iota_k = \lfloor k^{\frac13}\rfloor $, this gives $\left(1-i/k\right)^{k-i+3/2} \leq e^{-i} \exp(k^{-1/3})= e^{-i}\left(1+o(1)\right)$.
Proceeding similarly, the middle term satisfies
\[
\left(e^{-3}\sqrt{2\pi}\,k^{3/2}\left(k-4\right)\right)^{\frac{\iota_k}{k-1}} = 1 + o(1).
\]
Therefore, we obtain $(k-i)\,s_{k-i}\,\ttk^{k-i-1} \leq e^{-i}\left(1+o(1)\right)$, where the function hidden in the $o(1)$ notation depends on $k$ but not on $i$. 
Consequently, summing over $i$, we obtain 
\[
B(k) \leq \left(\sum_{i=1}^{\lambda_k}e^{-i}\right) \left(1+o(1)\right) \leq \left(\sum_{i=1}^{\infty}e^{-i}\right) \left(1+o(1)\right) 
= \frac{1+o(1)}{e-1} \textrm{ as claimed.} \qedhere
\]
\end{proof}

\begin{theorem}[Bounds for $\rho_k$]
\label{thm:bounds_rho}
There exists a constant $\beta$ such that 
for any $\alpha < \frac{e-2}{e-1}$, there exist 
$k(\alpha,\beta)$ such that for any $k \geq k(\alpha,\beta)$,
\[\frac{e}{k} \left(\frac{\alpha e^3}{\sqrt{2\pi}\,k^{5/2}} \right)^{\frac{1}{k-1}}
\left(1-\frac{\beta}{k}\right)
< \rho_k <
\frac{e}{k} \left(\frac{e^3}{\sqrt{2\pi}\,k^{3/2}(k-4)}  \right)^{\frac{1}{k-1}}.\]
Consequently, $\rho_k=\frac{e}{k}\left(1-\frac{5}{2}\, \frac{\log k}{k} + \mathcal{O}(\frac1k)  \right)$.
\end{theorem}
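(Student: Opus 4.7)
My strategy is to exploit the identity $\rho_k = \tau_k - \Lambda_k(\tau_k) = \tau_k(1 - \Lambda_k(\tau_k)/\tau_k)$ and to attack the two bounds in complementary ways. The upper bound is essentially free: since $\Lambda_k$ has non-negative coefficients and $\tau_k > 0$, one has $\rho_k < \tau_k = \ttk/(1+\ttk) < \ttk$, so the upper bound on $\ttk$ provided by Proposition~\ref{prop:bounds_tautilde} yields immediately the claimed upper bound on $\rho_k$.

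For the lower bound, I would first estimate $\tau_k$ from below via $\tau_k \geq \ttk(1-\ttk) \geq \ttk(1-e/k)$, using the upper bound $\ttk \leq e/k$ of Proposition~\ref{prop:bounds_tautilde}. Plugging in the lower bound $\ttk \geq (\alpha/(ks_k))^{1/(k-1)}$ and invoking $s_k \leq \sqrt{2\pi}\,k^{k+1/2}e^{-k-2}$ from Lemma~\ref{lm:sn bound}, a brief algebraic manipulation --- the same one used at the end of the proof of Proposition~\ref{prop:bounds_tautilde} --- rewrites $(\alpha/(ks_k))^{1/(k-1)}$ as $(e/k)(\alpha e^3/(\sqrt{2\pi}\,k^{5/2}))^{1/(k-1)}$, matching the announced lower bound up to the factor $(1-\beta/k)$.

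To recover that last factor I would upper bound $\Lambda_k(\tau_k)/\tau_k$ by $\mathcal{O}(1/k)$. Expanding gives
\[
\frac{\Lambda_k(\tau_k)}{\tau_k} = \ttk + (1+\ttk)\sum_{j=4}^{k} s_j\,\ttk^{j-1},
\]
using $\Lambda_k(\tau_k) = \tau_k\ttk + \sum_{j=4}^k s_j\ttk^j$ and $\ttk/\tau_k = 1+\ttk$. The first summand is $\mathcal{O}(1/k)$ by Proposition~\ref{prop:bounds_tautilde}. For the sum, I would reuse the splitting from the proofs of Lemmas~\ref{lem:A} and~\ref{lem:B}: the tail $4 \leq j \leq k-\iota_k-1$ contributes at most $A(k)/4 = \mathcal{O}(1/k^3)$, the head $k-\iota_k \leq j \leq k-1$ contributes at most $B(k)/(k-\iota_k) = \mathcal{O}(1/k)$, and the top term $s_k\ttk^{k-1}$ is bounded by $(k s_k \ttk^{k-1})/k \leq 1/k$. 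Combining these, $\Lambda_k(\tau_k)/\tau_k = \mathcal{O}(1/k)$, so $\rho_k \geq \tau_k(1-C/k)$ for some absolute constant $C$, and $(1-e/k)(1-C/k) \geq 1-\beta/k$ closes the lower bound.

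The asymptotic expansion would then follow by taking logarithms of both bounds. The dominant non-trivial correction comes from $\frac{1}{k-1}\log(1/k^{5/2}) = -\tfrac{5}{2}(\log k)/k + \mathcal{O}((\log k)/k^2)$; all other factors, including the constants $e^3/\sqrt{2\pi}$ and $\alpha$, the ratio $k/(k-4)$, and the $(1-\beta/k)$ correction, contribute only $\mathcal{O}(1/k)$ once exponentiated, which absorbs the $(\log k)/k^2$ error. The main technical hurdle I anticipate is the careful bookkeeping needed to pass from the $j$-weighted sums of Lemmas~\ref{lem:A} and~\ref{lem:B} to the unweighted ones; fortunately those estimates are loose enough that dividing term by term by $j$ should go through without any new ideas.
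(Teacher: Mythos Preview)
Your proposal is correct and follows essentially the same route as the paper. The only cosmetic difference is that the paper factors directly as $\rho_k=\ttk\bigl(1-\tfrac{2\ttk}{1+\ttk}-\sum_{j=4}^k s_j\ttk^{j-1}\bigr)$ and bounds the bracket by $1-\beta/k$, whereas you factor as $\rho_k=\tau_k\bigl(1-\Lambda_k(\tau_k)/\tau_k\bigr)$ and separately bound $\tau_k\geq\ttk(1-e/k)$; the sum-splitting via Lemmas~\ref{lem:A} and~\ref{lem:B} and the final asymptotic extraction are identical in both arguments.
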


\begin{proof}
The upper bound is immediate from the bound $\rho_k<\ttk$ and Proposition~\ref{prop:bounds_tautilde}. 

\noindent For the lower bound, we start from $\rho_k = \tau_k-\Lambda_k(\tau_k)$. 
The definitions of $\ttk$ and $\Lambda_k$ give 
$\rho_k=\ttk\left(1-\frac{2\ttk}{1+\ttk}-\sum_{j=4}^k s_j\ttk^{j-1}\right)$. 
Our main step is to deduce from this equality that $\rho_k \geq \ttk (1-\beta/k)$ for some constant $\beta$. 
The lower bound will then follow from Proposition~\ref{prop:bounds_tautilde}. 

As in the proof of Proposition~\ref{prop:bounds_tautilde}, 
we leverage upper bounds on $\ttk$ to build a lower bound on 
$1-\frac{2\ttk}{1+\ttk}-\sum_{j=4}^k s_j\ttk^{j-1}$. 
In this case, we use $\frac{2\ttk}{1+\ttk}\leq2\ttk\leq
2\frac{e}{k}$, and we will bound the summation by splitting the sum at the
same place:
\[\sum\limits_{j=4}^{k} s_j\,\ttk^{j-1} =
\sum\limits_{j=4}^{k-\iota_k-1} s_j\,\ttk^{j-1} +
\sum\limits_{j=k-\iota_k}^{k-1} s_j\,\ttk^{j-1} +
s_k\,\ttk^{k-1}.\]
Even though it is not the same summation, 
we can re-use the bounds from Lemmas~\ref{lem:A} and~\ref{lem:B}. 
Indeed, 
\begin{align*}
\sum\limits_{j=4}^{k-\iota_k-1} s_j\,\ttk^{j-1} & \leq \sum\limits_{j=4}^{k-\iota_k-1} j \, s_j\,\ttk^{j-1} 
= A(k) = \mathcal{O}\left(\frac{1}{k^3}\right) \\
\textrm{and } \sum\limits_{j=k-\iota_k}^{k-1} s_j\,\ttk^{j-1} & \leq \sum\limits_{j=k-\iota_k}^{k-1} \frac{j}{k - \iota_k} s_j\,\ttk^{j-1} 
= \frac{B(k)}{k - \iota_k} = \mathcal{O}\left(\frac{1}{k}\right).
\end{align*}
\noindent Finally, Proposition~\ref{prop:bounds_tautilde} ensures that $k\,s_k\,\ttk^{k-1} \leq 1$, 
and we obtain $\frac{2\ttk}{1+\ttk}+\sum_{j=4}^k s_j\ttk^{j-1} = \mathcal{O}\left(\frac1k\right)$. 
It follows that for some $\beta$, there exists $k(\beta)$ such that when $k \geq k(\beta)$ we have: 
\[
\frac{2\ttk}{1+\ttk}+\sum_{j=4}^k s_j\ttk^{j-1} \leq \frac{\beta}{k} 
\textrm{ \quad and hence \quad } \rho_k \geq \ttk \left( 1-\frac{\beta}{k} \right),
\]
which together with Proposition~\ref{prop:bounds_tautilde} proves the lower bound.

\medskip

To obtain the claimed asymptotic estimate of $\rho_k$, it is enough to observe that both the upper and the lower bound behave like 
$\frac{e}{k}\left(1-\frac{5}{2}\, \frac{\log k}{k} + \mathcal{O}(\frac1k)  \right)$. 
More precisely, 
\begin{align*}
\left(\frac{\alpha e^3}{\sqrt{2\pi}\,k^{5/2}} \right)^{\frac{1}{k-1}} 
& = \exp\left( \frac{\log(k^{-5/2}) + cst}{k-1} \right) 
= \exp\left( -\frac{5}{2}\frac{\log k}{k-1}+ \frac{cst}{k-1} \right) \\
& = 1 - \frac{5}{2}\frac{\log k}{k} +\mathcal{O}\left( \frac{1}{k}\right) \\
\textrm{so that } \left(\frac{\alpha e^3}{\sqrt{2\pi}\,k^{5/2}} \right)^{\frac{1}{k-1}} \left(1-\frac{\beta}{k}\right) & = 
1 - \frac{5}{2}\frac{\log k}{k} +\mathcal{O}\left( \frac{1}{k}\right) \\
\textrm{and }
\left(\frac{e^3}{\sqrt{2\pi}\,k^{3/2}(k-4)}  \right)^{\frac{1}{k-1}} & = 
\exp\left( \frac{\log(k^{-5/2}) + \log (\frac{1}{1-4/k}) + cst}{k-1} \right) \\
& = 1 - \frac{5}{2}\frac{\log k}{k} +\mathcal{O}\left( \frac{1}{k}\right). \qedhere 
\end{align*}
\end{proof}
 
It was known in~\cite{PP2011} that $\rho_k = \frac{e}{k} (1+ o(1))$,
but we are able to produce a more precise estimate. We require this
precision when we consider the limit as $k\rightarrow \infty.$

\medskip

Looking at the asymptotic estimate of $P^{(k)}_n$ provided by Theorem~\ref{thm:asym_pnk}, 
and aiming at obtaining an upper bound on this estimates, 
the only missing piece is a lower bound on $\Lambda_k''(\tau_k)$. 
The definition of $\Lambda_k$ (see Theorem~\ref{thm:asym_pnk}) gives 
\begin{align*}
 \Lambda_k''(x) &= \frac{2}{(1-x)^3}\left( 1 + \sum_{j=4}^{k} j s_j \left(\frac{x}{1-x} \right)^{j-1}+ 
 \frac{1}{2(1-x)}\sum_{j=4}^{k} j (j-1) s_j \left(\frac{x}{1-x} \right)^{j-2} \right) \\
 & \geq \frac{2}{(1-x)^3} \textrm{ for all } x \in (0,1), 
\end{align*}
and therefore the series expansion of $(1-x)^{-3}$ ensures that
$\Lambda_k''(\tau_k) \geq 2+6\ttk$. We could expand this expression
further, and use lower bounds on $\ttk$, but it turns out that for our
purposes, the bound $\Lambda_k''(\tau_k) \geq 2$ is sufficient. 

\medskip

Finally, we have all of the elements to find an upper bound for the asymptotic
estimate of $P^{(k)}_n$.

\begin{theorem}
For any fixed $k$, as $n$ tends to infinity, 
$P^{(k)}_n$ behaves like $\frac{\gamma_k}{(1 - \tau_k)^2} \rho_k^{-n}n^{-3/2}$, 
where  where $\gamma_k = \sqrt{\frac{\rho_k}{2\pi\Lambda_k''(\tau_k)}}$. 
And when $k$ grows to infinity, this estimates is no larger than 

\greybox{
\begin{equation}\label{eq:estimate_upper_bound}
\frac{1}{(1-\frac{e}{k})^2}\sqrt{\frac{e}{4k\pi}}\left(\frac{k}{e}\right)^{n} 
\left(1+\frac{5}{2}\frac{\log k}{k} + \mathcal{O}\left(\frac{1}{k} \right)\right)^n \, n^{-3/2}
\end{equation}
}
\end{theorem}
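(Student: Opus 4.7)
The first assertion is nothing more than a restatement of Theorem~\ref{thm:asym_pnk}, so no new argument is required for it. The content of the theorem is the explicit upper bound on this asymptotic estimate in terms of elementary functions of $k$, so the whole plan consists in bounding each of the three factors $\gamma_k$, $(1-\tau_k)^{-2}$, and $\rho_k^{-n}$ separately, using the estimates already established, and then multiplying.

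For the constant in front, I would start from $\gamma_k = \sqrt{\rho_k/(2\pi \Lambda_k''(\tau_k))}$. The pre-theorem discussion gives $\Lambda_k''(\tau_k) \geq 2$, and from $\rho_k < \tau_k < \ttk < e/k$ (combining Proposition~\ref{prop:bounds_tautilde} with the chain of inequalities preceding it) we deduce $\sqrt{\rho_k} \leq \sqrt{e/k}$, hence
\[
\gamma_k \;\leq\; \sqrt{\frac{e}{4\pi k}}.
\]
For the denominator, the same bound $\tau_k < e/k$ immediately yields $(1-\tau_k)^{-2} \leq (1 - e/k)^{-2}$.

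For the exponential factor, I would invoke the lower bound on $\rho_k$ in Theorem~\ref{thm:bounds_rho}, which can be written (via the asymptotic equivalent proved at the end of that theorem) as
\[
\rho_k \;\geq\; \frac{e}{k}\left(1 - \frac{5}{2}\frac{\log k}{k} + \mathcal{O}\!\left(\tfrac{1}{k}\right)\right).
\]
Taking reciprocals via the standard expansion $1/(1-x) = 1 + x + \mathcal{O}(x^2)$ (which is valid here since the parenthesized quantity tends to $1$) gives
\[
\rho_k^{-1} \;\leq\; \frac{k}{e}\left(1 + \frac{5}{2}\frac{\log k}{k} + \mathcal{O}\!\left(\tfrac{1}{k}\right)\right),
\]
and then raising to the $n$-th power yields the $(k/e)^n$ factor in Equation~\eqref{eq:estimate_upper_bound} together with its correcting factor to the $n$-th power.

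Multiplying these three bounds and the $n^{-3/2}$ factor gives exactly the right-hand side of Equation~\eqref{eq:estimate_upper_bound}. There is no real obstacle here: all the work has been done in Proposition~\ref{prop:bounds_tautilde}, Theorem~\ref{thm:bounds_rho}, and the $\Lambda_k''$ estimate just above. The only mildly delicate point to flag is that the correcting factor $(1 + \frac{5}{2}\log k/k + \mathcal{O}(1/k))^n$ is left unexpanded in the final bound, which is the right thing to do since its behaviour when $k$ and $n$ jointly tend to infinity is precisely the object of study in the subsequent discussion.
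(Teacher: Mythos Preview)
Your proof is correct and follows essentially the same route as the paper's: both bound $\gamma_k$ via $\Lambda_k''(\tau_k)\geq 2$ and $\rho_k\leq e/k$, bound $(1-\tau_k)^{-2}$ via $\tau_k\leq e/k$, and handle $\rho_k^{-n}$ through the asymptotic equivalent $\rho_k=\frac{e}{k}(1-\frac{5}{2}\frac{\log k}{k}+\mathcal{O}(\frac{1}{k}))$ from Theorem~\ref{thm:bounds_rho}. The only cosmetic difference is that the paper writes the last step as an equality (using the two-sided asymptotic) rather than phrasing it as a one-sided bound, but the content is identical.
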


\begin{proof}
Theorem~\ref{thm:main} ensures that $P^{(k)}_n \sim \frac{\gamma_k}{(1 - \tau_k)^2} \rho_k^{-n}n^{-3/2}$ as $n \rightarrow \infty$. 
From Proposition~\ref{prop:bounds_tautilde} and Theorem~\ref{thm:bounds_rho}, assuming now that $k$ is large enough, we have: 
\begin{itemize}
 \item $\tau_k \leq \ttk \leq \frac{e}{k}$, hence $\frac{1}{(1 - \tau_k)^2} \leq \frac{1}{(1-\frac{e}{k})^2}$; 
 \item $\rho_k \leq \frac{e}{k}$ and $\Lambda_k''(\tau_k) \geq 2$, hence $\gamma_k \leq \sqrt{\frac{e}{4k\pi}}$;
 \item and $\rho_k^{-n} = \left(\frac{e}{k}\right)^{-n} \left(1-\frac{5}{2}\frac{\log k}{k} + \mathcal{O}\left(\frac{1}{k} \right)\right)^{-n}
 = \left(\frac{k}{e}\right)^{n} \left(1+\frac{5}{2}\frac{\log k}{k} + \mathcal{O}\left(\frac{1}{k} \right)\right)^n$. \qedhere
\end{itemize}
\end{proof}

\paragraph{In the limit, Stirling's approximation.}
Our analysis of~$\mathcal{P}$ has brought together two classic asymptotic facts. 
The asymptotic growth of each $\mathcal{P}^{(k)}$ 
is of the form $P^{(k)}_n \sim \gamma \rho^{-n} n^{-3/2}$ for some real valued $\rho$ and $\gamma$. 
(Note that although $\mathcal{P}^{(k)}$ is not a simple variety of trees, 
the asymptotic behaviour of $P^{(k)}_n$ is of the same form as for such families.) 
But for the full class $\mathcal{P}$, the classical Stirling's approximation of $n!$ gives~$P_n \sim \left(\frac{n}{e}\right)^n\sqrt{2\pi n}$. 
Subtle analysis is required to reconcile these two estimates, 
and our upper bound on the asymptotic estimate of $P^{(k)}_n$ allows us to take a first step in this direction.

For any $n$, the strong interval tree of a permutation of size $n$ contains no prime node of arity larger than $n$. 
Thus, if $k\geq n$, $\mathcal{P}^{(k)}_n$ contains all trees corresponding to permutations of size $n$, 
and hence $P^{(k)}_n = n!$ for $k\geq n$. 
Now, forget for a moment that the estimates for $\mathcal{P}^{(k)}_n$ as $n \rightarrow \infty$ 
is valid only for fixed $k$, and consider the expression in~\eqref{eq:estimate_upper_bound} with $k=n$. 
It simplifies as follows:
\[
\frac{1}{(1-\frac{e}{n})^2}\sqrt{\frac{e}{4n\pi}}\left(\frac{n}{e}\right)^{n} 
\left(1+\frac{5}{2}\frac{\log n}{n} + \mathcal{O}\left(\frac{1}{n} \right)\right)^n \, n^{-3/2} 
= \sqrt{\frac{e}{4\pi}} \left(\frac{n}{e} \right)^n \sqrt{n} \cdot (1+o(1)).
\]
This is a constant times Stirling's formula (the constant being $\sqrt{\frac{e}{8 \pi^2}}$). 
And this is encouraging: indeed, even though setting $k=n$ was not justified, 
these purely formal manipulations do reconcile the two asymptotics, up to a constant factor. 

However, if we follow the same route for $P^{(2n)}_n$, which is also $n!$, 
the quantity in~\eqref{eq:estimate_upper_bound} gains an unwanted factor of $2^n$. 
This does not contradict the correctness of our asymptotic form for fixed $k$. 
It rather emphasizes that it is an open problem 
to develop asymptotic formulas when $k$ is a function of $n$, and they go to infinity together. 
This will require 
a very delicate treatment of the bounds, 
a much stronger understanding of how to take the limit as $k\rightarrow\infty$, 
and a return to the analytic inversion and transfer theorems to study how the error terms depend on~$k$. 

\subsection{A simpler case: when $\Lambda$ is analytic}
\label{subsec:analytic}

The example developed above is meant to illustrate a strategy to enumerate classes $\mathcal{C}$ of trees 
whose generating functions satisfy $C(z) = z + \Lambda(C(z))$, in particular in the case where $\Lambda$ is not analytic. 

The method we proposed is to consider a sequence of analytic $\Lambda_k$ 
(obtained for instance by truncations at order $k$) 
such that as formal power series,
$\lim_{k\rightarrow\infty}\Lambda_k = \Lambda$, and to study first the sets $\mathcal{C}^{(k)}$ of $\Lambda_k$-trees.

The next step, which is for the moment not accessible to us,
is to obtain results about $\mathcal{C}$ from what is known on the classes $\mathcal{C}^{(k)}$. 
We view the following questions as particularly interesting in this regard: 
Can we describe conditions so that the limit of the asymptotics of the subclasses tends to the asymptotics of the whole class? 
To which extent are the parameter formulas valid under the limit? 
We hope that our work will help developing techniques to obtain information on $\mathcal{C}$ by letting $k$ go to infinity. 

\medskip

The difficulty here lies in $\Lambda$ being not analytic. 
Notice however that the same filtration by truncations at order $k$ may also be defined when $\Lambda$ is analytic. 
Next, we show that in this case, we obtain the correct asymptotic formula when taking the limit as $k$ tends to infinity, 
\emph{i.e.} that limits in $n$ and $k$ commute.

\medskip

Consider a series $\Lambda(x) = \sum_{i\geq 2} \lambda_i x^i$ with non-negative coefficients. 
And for all $k \geq 2$, define $\Lambda_k(x) = \sum_{i = 2}^k \lambda_i x^i$. 
We denote respectively by $\mathcal{C}$ and $\mathcal{C}^{(k)}$ the classes of trees whose generating functions satisfy
\[
C(z) = z + \Lambda(C(z)) \textrm{\qquad and \qquad} C^{(k)}(z) = z + \Lambda_k(C^{(k)}(z)).
\]
We make the following assumptions: 
$\Lambda$ is analytic at $0$, and denoting $R$ the radius of convergence of $\Lambda$, 
there is a unique solution $\tau \in (0,R)$ to the equation $\Lambda'(x) =1$. 
Then, it follows from Theorem~\ref{thm:main} that $C(z)$ is analytic at $0$, 
has a unique dominant singularity $\rho = \tau - \Lambda(\tau)$, 
and, assuming further that $C(z)$ is aperiodic, that the coefficients of this series 
behave asymptotically like $[z^n]C(z) \sim \sqrt{\frac{\rho}{2\pi\Lambda''(\tau)}}\cdot \frac{\rho^{-n}}{n^{3/2}}$.

\begin{lemma}
For all $k\geq 2$, there exists a unique $\tau_k \in (0,+\infty)$ such that $\Lambda_k'(\tau_k) =1$. 
Moreover, the sequence $(\tau_k)_{k\geq 2}$ is decreasing and converges to $\tau$ as $k$ goes to infinity. 
\end{lemma}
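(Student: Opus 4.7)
The plan is to prove the three claims of the lemma (existence/uniqueness of $\tau_k$, monotonicity, convergence) in sequence, each following from elementary real-variable analysis applied to the polynomials $\Lambda_k$.

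\textbf{Existence and uniqueness.} For each $k \geq 2$ large enough that $\Lambda_k$ is not identically zero (which holds eventually, since $\Lambda$ itself is not identically zero), I would observe that $\Lambda_k'$ is a polynomial with non-negative coefficients, not all zero, and $\Lambda_k'(0) = 0$. Hence $\Lambda_k'$ is continuous and strictly increasing on $[0, +\infty)$, tending to $+\infty$ at infinity. By the intermediate value theorem, there is a unique $\tau_k \in (0, +\infty)$ with $\Lambda_k'(\tau_k) = 1$.

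\textbf{Monotonicity.} From $\Lambda_{k+1}(x) = \Lambda_k(x) + \lambda_{k+1} x^{k+1}$ I would deduce $\Lambda_{k+1}'(x) - \Lambda_k'(x) = (k+1)\lambda_{k+1} x^k \geq 0$ for all $x \geq 0$. Evaluating at $\tau_k$ gives $\Lambda_{k+1}'(\tau_k) \geq \Lambda_k'(\tau_k) = 1 = \Lambda_{k+1}'(\tau_{k+1})$, and the strict monotonicity of $\Lambda_{k+1}'$ then forces $\tau_{k+1} \leq \tau_k$. Similarly, using that $\Lambda_k'(\tau)$ is a partial sum of the convergent non-negative series $\Lambda'(\tau) = 1$, I get $\Lambda_k'(\tau) \leq 1 = \Lambda_k'(\tau_k)$, hence $\tau \leq \tau_k$. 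So $(\tau_k)$ is decreasing and bounded below by $\tau$, and thus converges to some $\tau^* \geq \tau$.

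\textbf{Convergence to $\tau$.} It remains to show that $\tau^* = \tau$. My plan is to pick an arbitrary $x_0 \in (\tau, R)$, and use strict monotonicity of $\Lambda'$ on $(0, R)$ together with $\Lambda'(\tau) = 1$ to conclude $\Lambda'(x_0) > 1$. Because $x_0$ lies strictly inside the disk of convergence of $\Lambda$, the partial sums $\Lambda_k'(x_0)$ converge to $\Lambda'(x_0)$. Hence $\Lambda_k'(x_0) > 1 = \Lambda_k'(\tau_k)$ for all sufficiently large $k$, and monotonicity of $\Lambda_k'$ yields $\tau_k < x_0$. Passing to the limit, $\tau^* \leq x_0$; since $x_0 > \tau$ was arbitrary, $\tau^* \leq \tau$, combining with $\tau^* \geq \tau$ to conclude $\tau^* = \tau$.

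\textbf{Main obstacle.} The only delicate step is the last one: I have to transfer a strict inequality from the full series $\Lambda'$ to its truncations $\Lambda_k'$, which is why it is essential to pick $x_0$ strictly inside the radius of convergence so the partial-sum convergence applies. The other two claims are routine once one recognises that everything in sight is a polynomial with non-negative coefficients.
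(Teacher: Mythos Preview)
Your proof is correct. The existence/uniqueness and monotonicity arguments coincide with the paper's. For the convergence step you take a slightly different route: the paper shows directly that the limit $\ell$ satisfies $\Lambda'(\ell)=1$ (bounding $\Lambda'(\ell)\leq 1$ via $\Lambda_k'(\ell)\leq 1$, and $\Lambda'(\ell)\geq 1$ via $\Lambda'(\tau_k)\geq 1$ once $\tau_k<R$), whereas you squeeze $\tau^*$ by picking an arbitrary $x_0\in(\tau,R)$ and showing $\tau_k<x_0$ eventually. Your version has the minor advantage that you never need to argue that $\Lambda'$ is defined at $\tau_k$; the paper's version avoids introducing the auxiliary point $x_0$. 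Both are equally elementary.
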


\begin{proof}
Fix some $k\geq 2$. From the definition of $\Lambda_k(x) = \sum_{i= 2}^k \lambda_i x^i$,
it follows that $\Lambda'_k(x)$ is a polynomial with non-negative coefficients, 
increasing from $0$ to $+\infty$ when $x$ varies from $0$ to $+\infty$. 
Moreover, its derivative $\Lambda''_k$ being nowhere zero on $(0,+\infty)$, $\Lambda'_k$ is strictly increasing. 
Therefore, there is a unique positive solution to $\Lambda_k'(x) =1$, that we denote $\tau_k$. 

The fact that the sequence $(\tau_k)_{k\geq 2}$ is decreasing is immediate from 
\[
1 = \Lambda_k'(\tau_k) = \sum_{i = 2}^k i \lambda_i \tau_k^{i-1} \leq \sum_{i= 2}^{k+1} i \lambda_i \tau_k^{i-1} = \Lambda_{k+1}'(\tau_k)
\]
and the fact that $\Lambda_{k+1}'$ is increasing.

The sequence $(\tau_k)_{k\geq 2}$ being decreasing and non-negative, it admits a limit, that we denote $\ell$. 
We want to prove that $\ell = \tau$, \emph{i.e.}, that $\Lambda'(\ell) =1$. 
First, for all $k$, $\ell \leq  \tau_k$, so that $\Lambda'_k(\ell) \leq 1$. 
Moreover, the sequence $(\Lambda'_k(\ell))$ is increasing (we keep adding non-negative terms), 
and thus converges towards a limit that is no larger than $1$. 
This limit being $\Lambda'(\ell)$, we obtain that $\Lambda'(\ell) \leq 1$. 
Second, since $(\tau_k)_{k\geq 2}$ is decreasing towards $\tau < R$, 
we get that for $k$ large enough, $\Lambda'$ is defined in $\tau_k$. 
For any such large $k$, we have 
\[
1 = \Lambda_k'(\tau_k) = \sum_{i= 2}^k i \lambda_i \tau_k^{i-1} \leq \sum_{i\geq 2} i \lambda_i \tau_k^{i-1} = \Lambda'(\tau_k),
\]
and taking the limit in $k$ gives $\Lambda'(\ell) \geq 1$.
\end{proof}

\begin{lemma}
For all $k\geq 2$, define $\rho_k = \tau_k - \Lambda_k(\tau_k)$.
The sequence $(\rho_k)_{k\geq 2}$ converges to $\rho$ as $k$ goes to infinity.
\end{lemma}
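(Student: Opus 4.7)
The plan is to write $\rho_k - \rho = (\tau_k - \tau) - \bigl(\Lambda_k(\tau_k) - \Lambda(\tau)\bigr)$ and treat the two pieces separately. The first piece tends to $0$ by the previous lemma, so the whole problem reduces to showing that $\Lambda_k(\tau_k) \to \Lambda(\tau)$ as $k \to \infty$.

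For that, I would use the triangle-style decomposition
\[
\Lambda_k(\tau_k) - \Lambda(\tau) \;=\; \bigl(\Lambda_k(\tau_k) - \Lambda(\tau_k)\bigr) + \bigl(\Lambda(\tau_k) - \Lambda(\tau)\bigr).
\]
The second summand is easy: since $\Lambda$ is analytic on the open disc of radius $R$ and $\tau_k \to \tau < R$, the function $\Lambda$ is continuous at $\tau$, so $\Lambda(\tau_k) \to \Lambda(\tau)$.

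The first summand equals $-\sum_{i > k} \lambda_i \tau_k^i$, and the key observation is that since $(\tau_k)$ decreases to $\tau < R$, one may fix some $\tau' \in (\tau, R)$ and observe that $\tau_k \leq \tau'$ for all sufficiently large $k$. Since the coefficients $\lambda_i$ are non-negative, this yields the domination
\[
0 \leq \sum_{i > k} \lambda_i \tau_k^i \leq \sum_{i > k} \lambda_i (\tau')^i,
\]
and the right-hand side is the tail of the convergent series $\Lambda(\tau')$, hence tends to $0$ as $k \to \infty$.

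Combining both estimates gives $\Lambda_k(\tau_k) \to \Lambda(\tau)$, and therefore $\rho_k \to \rho$. The only point that requires some care is ensuring that $\tau_k$ eventually lies inside the disc of convergence of $\Lambda$; this is exactly what the previous lemma guarantees, and it is what allows us to dominate the tail uniformly in $k$.
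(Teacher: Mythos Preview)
Your proof is correct and follows the same overall strategy as the paper: reduce to showing $\Lambda_k(\tau_k)\to\Lambda(\tau)$, and exploit that $\tau_k$ eventually lies inside the disc of convergence of $\Lambda$. The one difference is in how the convergence $\Lambda_k(\tau_k)\to\Lambda(\tau)$ is established. You split $\Lambda_k(\tau_k)-\Lambda(\tau)$ as $(\Lambda_k(\tau_k)-\Lambda(\tau_k))+(\Lambda(\tau_k)-\Lambda(\tau))$ and control the first piece by a tail bound at an auxiliary point $\tau'\in(\tau,R)$. The paper instead uses the sandwich
\[
\Lambda_k(\tau)\;\le\;\Lambda_k(\tau_k)\;\le\;\Lambda(\tau_k),
\]
valid because $\tau\le\tau_k$ and the $\lambda_i$ are non-negative; both bounds tend to $\Lambda(\tau)$ (the left as a partial sum, the right by continuity). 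This avoids introducing $\tau'$ and is marginally cleaner, but your tail-domination argument is equally valid and arguably more robust, since it does not rely on the monotonicity $\tau\le\tau_k$ but only on the convergence $\tau_k\to\tau$.
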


\begin{proof}
It is enough to prove that $(\Lambda_k(\tau_k))$ converges to $\Lambda(\tau)$. 
Like in the previous proof, since $(\tau_k)_{k\geq 2}$ is decreasing towards $\tau < R$, 
we get that for $k$ large enough, $\Lambda$ is defined in $\tau_k$. 
For such large $k$, we have 
\[
\sum_{j=2}^k \lambda_k \tau^k \leq \sum_{j=2}^k \lambda_k \tau_k^k \leq \sum_{j\geq 2} \lambda_k \tau_k^k
\textrm{\quad that is to say } \Lambda_k(\tau) \leq \Lambda_k(\tau_k) \leq \Lambda(\tau_k).
\]
Because $(\Lambda_k(\tau))$ and $(\Lambda(\tau_k))$ share the same limit $\Lambda(\tau)$ as $k$ goes to infinity, 
we obtain that $\lim\limits_{k \rightarrow +\infty} \Lambda_k(\tau_k) = \Lambda(\tau)$. 
\end{proof}

From Theorem~\ref{thm:main}, we obtain
$[z^n] C^{(k)}(z) \sim \sqrt{\frac{\rho_k}{2\pi\Lambda_k''(\tau_k)}}\cdot \frac{\rho_k^{-n}}{n^{3/2}}$, 
and the two lemmas above ensure that taking the limit in $k$ in this estimates give 
$\sqrt{\frac{\rho}{2\pi\Lambda''(\tau)}}\cdot \frac{\rho^{-n}}{n^{3/2}}$. 
In addition, $\lim\limits_{k \rightarrow +\infty} C^{(k)}(z) = C(z)$ from which we get 
$[z^n] \lim\limits_{k \rightarrow +\infty} C^{(k)}(z) \sim \sqrt{\frac{\rho}{2\pi\Lambda''(\tau)}}\cdot \frac{\rho^{-n}}{n^{3/2}}$. 
In other words, taking the limit in $k$ in the estimate of the number of trees of size $n$ in $\mathcal{C}^{(k)}$ 
gives the estimates of the number of trees of size $n$ in $\mathcal{C}$.

\section*{Acknowledgments}
This work was partially supported by ANR project \textsc{Magnum} (2010-BLAN-0204), 
NSERC Discovery grant 31-611453, and funding by Universit\'e Paris-Est.

\medskip

We are indebted to Cedric Chauve for his guidance and access to the
mammalian genome data set, prepared by Bradley Jones and Rosemary
McCloskey. Furthermore, Ms. McCloskey wrote the code for the Boltzmann
generator, amongst other extremely useful things.  We thank Carine
Pivoteau for demonstrating her interest in our project at several
stages.  MM is particularly grateful to both LIGM and LaBRI for hosting her
during the course of this work.
\bibliographystyle{plain}

\begin{thebibliography}{1}
\bibitem{AlAt05}
M.H. Albert and M.D. Atkinson.
\newblock Simple permutations and pattern restricted permutations.
\newblock {\em Discrete Math.}, 300:1--15, 2005.

\bibitem{AlAtKl03}
M.H. Albert, M.D. Atkinson, and M.~Klazar.
\newblock The enumeration of simple permutations.
\newblock {\em J. Integer Seq.}, 6:03.4.4, 2003.

\bibitem{BeBeChPa07}
S.~B\'erard, A.~Bergeron, C.~Chauve, and C.~Paul.
\newblock Perfect sorting by reversals is not always difficult.
\newblock {\em IEEE/ACM Trans. Comput. Biol. Bioinform.}, 4:4--16, 2007.

\bibitem{BeChdeMRa05}
A.~Bergeron, C.~Chauve, F.~de~Montgolfier, and M.~Raffinot.
\newblock Computing common intervals of {K} permutations, with applications to
  modular decomposition of graphs.
\newblock {\em Proc. 13th Annual European Symposium on Algorithms, in Lecture
  Notes in Comput. Sci.}, 3669:779--790, 2005.

\bibitem{BGG11}
O. Bodini, D. Gardy, and B. Gittenberger. 
\newblock Lambda-terms of bounded unary height.
\newblock {\em Siam workshop on Analytic Algorithmics and Combinatorics (ANALCO)}, January 2011, San Francisco (USA).

  
\bibitem{BoLu76}
K.~S. Booth and G.~S. Lueker.
\newblock Testing for the consecutive ones property, interval graphs, and graph
  planarity using {$PQ$}-tree algorithms.
\newblock {\em J. Comput. System Sci.}, 13(3):335--379, 1976.

\bibitem{BoChMiRo11}
M.~Bouvel, C.~Chauve, M.~Mishna, and D.~Rossin.
\newblock Average-case analysis of perfect sorting by reversals.
\newblock {\em Discrete Math. Algorithms Appl.}, 3(3):369--392, 2011.

\bibitem{ShortVersion}
M.~Bouvel, M.~Mishna, and C.~Nicaud.
\newblock Some simple varieties of trees arising in permutation analysis.
\newblock {\em Proc. FPSAC 2013, in DMTCS Proceedings}, AS:825--836, 2013.

\bibitem{BXHaPa05}
B.M.~Bui Xuan, M.~Habib, and C.~Paul. 
\newblock Revisiting Uno and Yagiura's Algorithm. 
\newblock {\em Proc. ISAAC 2005, in LNCS}, 3827:146--155, 2005.

\bibitem{PP2011}
G.~Chapuy, A.~Pierrot, D.~Rossin. 
\newblock On growth rate of wreath-closed permutation classes.
\newblock Talk at the conference {\em Permutation Patterns}, 2011.

\bibitem{ChMcMi11}
C.~Chauve, R.~McCloskey, and M.~Mishna.
\newblock Personal communication, 2011.

\bibitem{FlSe09}
P.~Flajolet and R.~Sedgewick.
\newblock {\em Analytic Combinatorics}.
\newblock Cambridge University Press, 2009.

\bibitem{Gess90}
I.~Gessel.
\newblock Symmetric functions and P-recursiveness.
\newblock {\em J. Combin. Theory Ser. A}, 53(2):257--285, 1990.


\bibitem{HeSt01}
S.~Heber and J.~Stoye. 
\newblock Finding All Common Intervals of $k$ Permutations. 
\newblock {\em Proc. CPM 2001, in LNCS}, 2089:207--218, 2001.


\bibitem{OEIS-simple}
The On-Line~Encyclopedia of~Integer~Sequences.
\newblock Published electronically at http://oeis.org.

\end{thebibliography}

\end{document}